\title{A criterion for the uniform eventual positivity of operator semigroups}
\author[1]{Daniel Daners}%
\author[2]{Jochen Gl\"uck\thanks{Supported by a scholarship within the
    scope of the LGFG Baden-W\"urttemberg, Germany.}}%
\affil[1]{School of Mathematics and Statistics, University of Sydney,
  NSW 2006, Australia\authorcr%
  \nolinkurl{daniel.daners@sydney.edu.au}}%
\affil[2]{Institut f\"ur Angewandte Analysis, Universit\"at Ulm,
  D-89069 Ulm, Germany\authorcr%
  \nolinkurl{jochen.glueck@uni-ulm.de}}%
\date{\today}
\newtheorem{theorem}{Theorem}[section]
\newtheorem{proposition}[theorem]{Proposition}
\newtheorem{corollary}[theorem]{Corollary}
\theoremstyle{definition}
\theoremstyle{remark}
\newtheorem{remark}[theorem]{Remark}
\numberwithin{equation}{section}
\DeclareMathOperator{\one}{\mathbf 1}
\DeclareMathOperator{\spb}{s}
\DeclareMathOperator{\spec}{\sigma}
\DeclareMathOperator{\repart}{Re}
\DeclareMathOperator{\dist}{dist}
\DeclareMathOperator{\Res}{\mathcal R}
\newcommand{\dx}{\mathrm{d}}
\newcommand{\bbN}{\mathbb{N}}
\newcommand{\bbR}{\mathbb{R}}
\newcommand{\bbC}{\mathbb{C}}
\newcommand{\calL}{\mathcal{L}}
\newcommand{\phdot}{\mathord{\,\cdot\,}}
\let\oldthebibliography\thebibliography
\renewcommand\thebibliography[1]{
  \oldthebibliography{#1}
  \setlength{\parskip}{0pt}
  \setlength{\itemsep}{0pt plus 0.3ex}
  \small
}
\begin{document}
\maketitle

\begin{abstract}
  Consider a $C_0$-semigroup $(e^{tA})_{t \ge 0}$ on a function space or, more generally, on a Banach lattice $E$. We prove a sufficient criterion for the operators $e^{tA}$ to be positive for all sufficiently large times $t$, while the semigroup itself might not be positive. This complements recently established criteria for the individual orbits of the semigroup to become eventually positive for all positive initial values. We apply our main result to study the qualitative behaviour of the solutions to various partial differential equations.
\end{abstract}

\section{Introduction}
\label{section:introduction}
Consider the abstract evolution equation
\begin{equation}
  \tag{EQ}\label{eq:ivp}
  \begin{aligned}
    \dot u(t) & = Au(t)&&\text{for $t \ge 0$,} \\
    u(0) & = u_0&&
  \end{aligned}
\end{equation}
on a Banach space $E$, where $A\colon E \supseteq D(A) \to E$ is a linear operator and the vector $u_0 \in E$ is an initial value. If the operator $A$ is the generator of a $C_0$-semigroup, which we denote by $(e^{tA})_{t \ge 0}$, then the equation \eqref{eq:ivp} has a (mild) solution for every initial value $u_0 \in E$ and this solution is given by $t \mapsto e^{tA}u_0$. If we wish to understand the qualitative behaviour of the solutions to \eqref{eq:ivp} we thus have to study the semigroup $(e^{tA})_{t \ge 0}$. We refer the reader to one of the monographs \cite{Engel2000, Engel2006, Pazy1983} for details about $C_0$-semigroups and their relation to \eqref{eq:ivp}.

In many applications $E$ is some kind of function space or, from a more abstract point of view, a Banach lattice. It is natural to ask whether the solution of \eqref{eq:ivp} takes only positive values in case that the initial value $u_0$ is positive. This is equivalent to asking whether the $C_0$-semigroup $(e^{tA})_{t \ge 0}$ is positive, a topic which has been extensively studied (see for instance \cite{Arendt1986, Batkai2017} for an overview). 

Typical examples of positive $C_0$-semigroups are those generated by first or second order differential operators with local boundary conditions such as the Dirichlet- or Neumann-Laplacian. On the other hand, many other types of differential operators, for instance higher order elliptic operators, the Laplace operator with certain non-local boundary conditions and the Dirichlet-to-Neumann operator do not generate positive semigroups. Yet, it was recently observed that -- under appropriate assumptions -- those differential operators often exhibit a certain weaker type of positive behaviour; namely, whenever the initial value $u_0$ in \eqref{eq:ivp} is positive, then the solution becomes positive for all large enough times. Such \emph{eventual positivity} was first observed for several concrete differential equations \cite{Ferrero2008, Gazzola2008, Daners2014}. In order to understand this phenomenon, the development of a general theory of eventually positive $C_0$-semigroups was initiated by J.B.~Kennedy and the present authors in several recent works, see \cite{Daners2016, Daners2016a, Daners2017, DanersPERT} and \cite[Part~III]{GlueckDISS}. Applications of the general theory developed so far include biharmonic heat equations \cite[Section~6.4]{Daners2016}, \cite[Sections~6 and~9]{Daners2016a}, heat equations with non-local boundary conditions \cite[Section~6]{Daners2016a}, \cite[Section~11.7]{GlueckDISS}, the evolution equation associated with the Dirichlet-to-Neumann operator \cite[Section~6.2]{Daners2016}, \cite[Sections~6 and~9]{Daners2016a} and delay differential equations \cite[Section~6.5]{Daners2016}, \cite[Section~9]{Daners2016a}, \cite[Sections~8.4 and~11.6]{GlueckDISS}. The notion of eventual positivity is also related to generalised Perron--Frobenius theory as discussed in \cite{Glueck2017}. More on the history and related results on eventual positivity can be found in the introduction to \cite{Daners2016a}.

\paragraph*{Individual and uniform eventual positivity.}
For most of the applications stated above, it was proved that the semigroup under consideration has a certain property which we name \emph{individual eventual strong positivity}. By this, we mean the following: let $E$ be a Banach lattice with positive cone $E_+$ and let $u \in E_+$ be a quasi-interior point of $E_+$ (in the important case where $E$ is an $L^p$ space over a $\sigma$-finite measure space and $1 \le p < \infty$, this simply means that $u(\omega) > 0$ for almost every $\omega$ in the measure space). For $f \in E$ we write $f \gg_u 0$ if there exists a number $\varepsilon > 0$ such that $f \ge \varepsilon u$. A $C_0$-semigroup $(e^{tA})_{t \ge 0}$ on $E$ is said to be \emph{individually eventually strongly positive with respect to $u$} if, for every non-zero $f \in E_+ \setminus \{0\}$, there exists a time $t_0 \ge 0$ such that $e^{tA}f \gg_u 0$ for all subsequent times $t \ge t_0$. The word \emph{individually} in this notion stresses that $t_0$ is allowed to depend on $f$. In case that $t_0$ can be chosen to be independent of $f$ we say that the semigroup is \emph{uniformly eventually strongly positive with respect to $u$}. 

In \cite{Daners2016a} and \cite[Chapter~9]{GlueckDISS} the reader can find several criteria and characterisations of individual eventual strong positivity which can be used to study this property for the above mentioned examples. On the other hand, much less is currently known about uniform eventual strong positivity. The one sufficient criterion known so far is only applicable for self-adjoint semigroups on $L^2$-spaces; it is due to the present authors and can be found in the second author's PhD thesis \cite[Theorem~10.2.1]{GlueckDISS}. In this article we present a much more general criterion which is applicable on a large class a Banach lattices and also to non-self adjoint semigroups on $L^2$-spaces (Theorem~\ref{thm:criterion-semigroups}). We achieve this by imposing certain conditions not only on the semigroup generator, but also on its dual operator. In particular, the conditions are convenient to apply on a reflexive space if the problem and its dual have the same structure. This is often the case for boundary value problems on some $L^p$ space with $1<p<\infty$. In fact most of our examples in Section~\ref{section:applications} fall into that category.

\paragraph*{Simplest case of main result and organisation of the paper.}
In the simplest case our result can be described as follows: Assume that $E=L^p(\Omega)$ for some finite measure space $(\Omega,\mu)$ and some $p\in(1,\infty)$. Then the dual space is $E'=L^q(\Omega)$ with $q\in(1,\infty)$ such that $1/p+1/q=1$. Assume that $A$ generates a strongly continuous real semigroup on $E$. Then the dual operator $A'$ of $A$ generates a strongly continuous semigroup on $E'=L^q(\Omega)$. Further suppose that the following two assumptions are satisfied:
\begin{itemize}
\item[(a)] There exist $t_1,t_2>0$ such that $e^{t_1A}L^p(\Omega)\subseteq L^\infty(\Omega)$ and $e^{t_2A'}L^q(\Omega)\subseteq L^\infty(\Omega)$.
\item[(b)] The spectral bound $\spb(A)$ is a dominant spectral value and $A$ as well as $A'$ have an eigenfunction corresponding to $\spb(A)$ that is positive and bounded away from zero on $\Omega$.
\end{itemize}
Then, there exist $t_0>0$ and $\varepsilon>0$ such that
\begin{displaymath}
  e^{tA}f\geq \varepsilon \int_\Omega f\,\dx x
  \qquad\text{for all $t\geq t_0$ and all $f\in L^p(\Omega)$.}
\end{displaymath}
The above formulation is suitable to deal with parabolic problems with Neumann-type boundary conditions where any positive eigenfunction is bounded away from zero on $\overline\Omega$. To deal with more general problems including boundary value problems with Dirichlet boundary conditions we need a few  preliminary results on duality in Banach lattices. These are provided in Section~\ref{section:domination-and-duality}. Our main result in full generality and several convenient reformulations are proved in Section~\ref{section:eventually-positive-semigroups}. In Section~\ref{section:applications} we illustrate how our results apply to various differential operators, thereby improving a number of results from \cite[Section~6]{Daners2016a} and \cite[Chapter~11]{GlueckDISS}.

We assume that the reader is familiar with standard $C_0$-semigroup theory and with the basic concepts of real and complex Banach lattices. The most important examples of Banach lattices where our results can be applied to are $L^p$-spaces over $\sigma$-finite measure spaces, where $1 \le p < \infty$. Standard references for the theory of Banach lattices are, among others, \cite{Schaefer1974} and \cite{Meyer-Nieberg1991}. Standard references for the theory of $C_0$-semigroups are, for instance, \cite{Pazy1983}, \cite{Engel2000} and \cite{Engel2006}. Apart from the notions defined above, all further notation and terminology is introduced when needed.

\section{Domination and duality}
\label{section:domination-and-duality}
Our results depend on some regularising properties of semigroups. These regularising properties are inspired by properties of operators mapping $L^1$ to $L^p$ or $L^p$ to $L^\infty$. More precisely, let $p \in (1,\infty)$ and consider the $L^p$-space over a finite measure space. If $T$ is a bounded linear operator from $L^p$ to $L^\infty$, then it is well-known that $T$ is compact from $L^p$ to $L^p$. By duality, if $T$ is bounded from $L^1$ to $L^p$, then the restriction $T|_{L^p}: L^p \to L^p$ is compact.

Similar compactness properties can be obtained for certain operators on
Banach lattices based on two abstract constructions. These constructions
will be required to state and prove our main theorem in
Section~\ref{section:eventually-positive-semigroups}. We first motiviate
the well known constructions for $L^p$-spaces, then generalise to complex Banach lattices. If $(\Omega,\mu)$ is a
finite measure space and $p \in (1,\infty)$, then we have the continuous
and dense injections
\begin{equation}
  \label{eq:lp-inclusions}
  L^\infty(\Omega,\mu)
  \hookrightarrow L^p(\Omega,\mu)
  \hookrightarrow L^1(\Omega,\mu).
\end{equation}
These injections can be constructed directly from suitable subspaces of $L^p(\Omega,\mu)$. For the first construction let $\one \in L^p(\Omega,\mu)$ denote the constant function with value $1$. Then $L^\infty(\Omega,\mu)$ consists of exactly the functions $f \in L^p(\Omega,\mu)$ whose modulus is dominated by a multiple of $\one$, and the $L_\infty$-norm of each such $f$ is given by
\begin{displaymath}
  \|f\|_\infty = \inf \{c\geq 0\colon |f|\leq c\one\}.
\end{displaymath}
For the second construction let $\varphi$ denote the functional on $L^p(\Omega,\mu)$ given by $\langle \varphi,f\rangle := \int_\Omega f\,\dx \mu$ for all $f\in L^p(\Omega,\mu)$. Then the $L_1$-norm of each $f \in L^p(\Omega,\mu)$ is given by
\begin{displaymath}
  \|f\|_1 = \langle \varphi, |f|\rangle,
\end{displaymath}
and $L^1(\Omega,\mu)$ is the completion of $L^p(\Omega,\mu)$ with respect to this norm. Similar constructions apply to an arbitrary real or complex Banach lattice $E$ taking the role of $L^p(\Omega)$. For the first construction fix $u \in E_+$.  Letting $u$ play the role of $\one$ above we define 
\begin{displaymath}
  \begin{aligned}
    E_u &:= \bigl\{f \in E\colon
    \text{there exists $c\geq 0$ such that $|f|\leq c u$}\bigr\},\\
    \quad \|f\|_u &:= \inf \{c \ge 0\colon|f| \le cu\}
    \quad \text{for all $f \in E_u$.}
  \end{aligned}
\end{displaymath}
The space $E_u$ is called the \emph{principal ideal in $E$ generated by $u$} and $\|\phdot\|_u$ is called the \emph{gauge norm with respect to $u$} on $E_u$. It is a norm on $E_u$ which renders $E_u$ a Banach lattice and, moreover, an AM-space with unit $u$. This means that there exists a compact Hausdorff space $K$ and an isometric Banach lattice isomorphism $E_u \to C(K)$ mapping $u$ to $\one$.

For the second construction fix a stricly positive functional $\varphi \in E'$. By strictly positive we mean that $\langle \varphi, f\rangle > 0$ for all $f \in E_+ \setminus \{0\}$. If we define
\begin{displaymath}
  \|f\|_\varphi := \langle \varphi, |f| \rangle
  \quad \text{for all $f \in E$,}
\end{displaymath}
then $\|\phdot\|_\varphi$ is a norm an $E$. However, $E$ is not in general complete with this norm. We denote the completion of $E$ with 
respect to this norm by $E^\varphi$ and, for convenience, we denote the norm on $E^\varphi$ again by $\|\phdot\|_\varphi$. Then $E^\varphi$ is a Banach lattice and the norm $\|\phdot\|_\varphi$ is additive on the positive cone, by which we mean that $\|f+g\|_\varphi = \|f\|_\varphi + \|g\|_\varphi$ for all $f,g \in E^\varphi_+$. In other words, $E^\varphi$ is an \emph{AL-space} and hence, there exists an isometric Banach lattice isomorphism from $E^\varphi$ to an $L^1$-space over some not necessarily $\sigma$-finite measure space.

From now on, we always endow $E_u$ with the gauge norm $\|\phdot\|_u$ and $E^\varphi$ with the norm $\|\phdot\|_\varphi$. The canonical injections
\begin{equation}
  \label{eq:bl-inclusions}
  E_u \hookrightarrow E \hookrightarrow E^\varphi
\end{equation}
are continuous and they are injective lattice homomorphisms. The second embedding has always a dense range by construction. Whether or not the first embedding is dense depends on the choice of $u>0$. We call the points $u\in E_+$ a \emph{quasi-interior point} of $E_+$ if that embedding is dense; see \cite[Definition~1.2.12(iii)]{Meyer-Nieberg1991}. Note that~\eqref{eq:bl-inclusions} is the analogue of~\eqref{eq:lp-inclusions} in our abstract setting.

Given a bounded linear operator $T$ on $E$, there are two questions canonically related to~\eqref{eq:bl-inclusions}, namely (1) whether $T$ maps $E$ into $E_u$ and (2) whether $T$ extends to a continuous operator from $E^\varphi$ to $E$. Question~(1) plays a fairly important role in characterisation theorems for individually eventually positive semigroups (see for instance \cite[Section~5]{Daners2016a}). The following proposition shows that the second question is related to a dual version of the first one. To state the proposition we recall that the dual space $E'$ of a Banach lattice $E$ is itself a Banach  lattice. Hence, if $\varphi \in E'$ is a positive functional, that is, $\langle \varphi, f\rangle \ge 0$ for all $f \in E_+$, then the principal ideal $(E')_\varphi$ is well-defined. If $E$ and $F$ are Banach spaces, we denote the space of all bounded linear operators from $E$ to $F$ by $\calL(E,F)$, and we write $\calL(E) = \calL(E,E)$ for short.
\begin{proposition}
  \label{prop:domination-and-duality}
  Let $E$ be a real or complex Banach lattice, let $T\in\calL(E)$ and let $\varphi \in E'$ be a strictly positive functional. Assume that $T'E' \subseteq (E')_{\varphi}$.
  
  Then $T'\in\calL\bigl(E',(E')_\varphi\bigr)$. Moreover, $T$ extends to a bounded linear operator $E^{\varphi} \to E$, which we also denote by $T$ and whose norm satisfies the estimate $\|T\|_{E^{\varphi} \to E} \le \|T'\|_{E' \to (E')_\varphi}$. 
\end{proposition}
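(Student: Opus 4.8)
The plan is to treat the two assertions separately, establishing the boundedness of $T'$ first and then using it to control the extension of $T$.

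For the first assertion I would invoke the closed graph theorem. By hypothesis $T'$ maps $E'$ into $(E')_\varphi$, so $T'\colon E' \to (E')_\varphi$ is a well-defined linear map between Banach spaces (recall that $(E')_\varphi$, being a principal ideal equipped with its gauge norm, is an AM-space and in particular complete). To see that its graph is closed, suppose $\psi_n \to \psi$ in $E'$ and $T'\psi_n \to \eta$ in $(E')_\varphi$. Since the canonical embedding $(E')_\varphi \hookrightarrow E'$ is continuous, $T'\psi_n \to \eta$ also in $E'$; on the other hand $T'$ is continuous as an operator on $E'$, so $T'\psi_n \to T'\psi$ in $E'$. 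Hence $\eta = T'\psi$, the graph is closed, and the closed graph theorem yields $T' \in \calL\bigl(E',(E')_\varphi\bigr)$. Set $M := \|T'\|_{E' \to (E')_\varphi}$.

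For the second assertion the key is a duality estimate. Fix $f \in E$ and $\psi \in E'$. Using the identity $\langle \psi, Tf\rangle = \langle T'\psi, f\rangle$, the fundamental inequality $|\langle \chi, g\rangle| \le \langle |\chi|, |g|\rangle$ valid for any $\chi \in E'$ and $g \in E$, and the domination $|T'\psi| \le \|T'\psi\|_\varphi\,\varphi$ coming from the definition of the gauge norm on $(E')_\varphi$, I would estimate
\begin{displaymath}
  |\langle \psi, Tf\rangle|
  = |\langle T'\psi, f\rangle|
  \le \langle |T'\psi|, |f|\rangle
  \le \|T'\psi\|_\varphi\,\langle \varphi, |f|\rangle
  \le M\,\|\psi\|_{E'}\,\|f\|_\varphi ,
\end{displaymath}
where the last step uses $\|T'\psi\|_\varphi \le M\|\psi\|_{E'}$ and $\langle \varphi, |f|\rangle = \|f\|_\varphi$; the middle monotonicity step is justified because the functional $\|T'\psi\|_\varphi\,\varphi - |T'\psi|$ is positive and is evaluated at the positive element $|f|$. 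Taking the supremum over all $\psi$ in the unit ball of $E'$ gives $\|Tf\|_E \le M \|f\|_\varphi$ for every $f \in E$.

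This last inequality says precisely that $T$ is bounded from $(E, \|\phdot\|_\varphi)$ into $E$ with norm at most $M$. Since $E$ is dense in its completion $E^\varphi$ by construction and the target space $E$ is complete, $T$ extends uniquely to a bounded operator $E^\varphi \to E$ preserving the norm bound, whence $\|T\|_{E^\varphi \to E} \le M = \|T'\|_{E' \to (E')_\varphi}$, as claimed. I expect the only delicate point to be the careful use of the fundamental inequality $|\langle \chi, g\rangle| \le \langle |\chi|, |g|\rangle$ in the complex Banach lattice setting, where the modulus of a functional and of a vector must be handled through the standard formula $\langle |\chi|, g\rangle = \sup\{|\langle \chi, h\rangle| : |h| \le g\}$ valid for $g \in E_+$; everything else is a routine combination of the closed graph theorem and extension by density.
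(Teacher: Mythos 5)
Your proposal is correct and follows essentially the same route as the paper's proof: the closed graph theorem for $T'\in\calL\bigl(E',(E')_\varphi\bigr)$, the duality estimate $\|Tf\|_E=\sup_{\|\psi\|\le 1}|\langle T'\psi,f\rangle|\le\langle|T'\psi|,|f|\rangle\le \|T'\|_{E'\to(E')_\varphi}\langle\varphi,|f|\rangle$, and extension by density to the completion $E^\varphi$. You merely spell out details the paper leaves implicit (the closedness of the graph, the attainment of the domination $|T'\psi|\le\|T'\psi\|_\varphi\,\varphi$, and the density argument), all of which are accurate.
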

\begin{proof}
  First note that $T'\in\calL\bigl(E',(E')_\varphi\bigr)$ due to the closed graph theorem. If $\psi \in E'$ with $\|\psi\|_{E'}\leq 1$, then $|T'\psi|\leq \|T'\|_{E' \to (E')_\varphi}\varphi$. Hence, 
  \begin{displaymath}
    \|Tf\|
    = \sup_{\|\psi\| \le 1} |\langle T'\psi, f\rangle|
    \leq\sup_{\|\psi\|\leq 1}\langle |T'\psi|,|f|\rangle
    \leq\|T'\|_{E'\to(E')_\varphi}\langle \varphi, |f| \rangle
  \end{displaymath}
  for all $f \in E$, which proves the assertion.
\end{proof}
Next we show a somewhat surprising compactness property. Let $E$ be a Banach lattice, let $T \in \calL(E)$ and let $u \in E_+$. Under the assumption that $TE \subseteq E_u$ it was shown in \cite[Theorem~2.3(i)]{Daners2017} that $T^2$ is compact, provided that the Banach lattice $E$ has order continuous norm. If in addition $E$ is reflexive, then it follows that $T$ itself is compact; see \cite[Theorem~2.3(ii)]{Daners2017}. This generalises the compactness results on $L^p$ mentioned at the beginning of the section. If, on the other hand, $E$ does not have order continuous norm, such an assertion is clearly false: for example, the domination property $TE \subseteq E_u$ is always fulfilled if $E=C(K)$ for some ompact Hausdorff space $K$ and if $u=\one$. However, not every bounded linear operator on $C(K)$ has a compact power. It is our aim to show that the situation changes if we assume that $T$ and its dual both fulfil a domination condition. We will show that in this case, $T$ is automatically power compact, regardless of whether the norm on $E$ is order continuous or not. In fact, we have the following slightly more general result.

\begin{theorem}
  \label{thm:domination-and-compactness}
  Let $E$ be a real or complex Banach lattice, let $u \in E_+$ and let $\varphi \in E'$ be a strictly positive positive functional. Let $T_1,\dots,T_4\in\calL(E)$ such that $T_kE \subseteq E_u$ and $T_k'E' \subseteq (E')_\varphi$ for all $k \in \{1,2,3,4\}$. Then $T_4T_3T_2T_1\in\calL(E)$ is compact.
\end{theorem}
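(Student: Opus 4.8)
The plan is to exploit the Dunford--Pettis property, which both $E_u$ (an AM-space) and $E^\varphi$ (an AL-space) enjoy, in order to upgrade weak compactness to norm compactness. Write $\iota_u\colon E_u\to E$ and $\iota_\varphi\colon E\to E^\varphi$ for the canonical embeddings from~\eqref{eq:bl-inclusions} and set $j:=\iota_\varphi\iota_u\colon E_u\to E^\varphi$. My first claim is that $j$ is \emph{weakly} compact: the closed unit ball of $E_u$ equals the order interval $\{f\in E_u:|f|\le u\}$, so its image under $j$ lies in the order interval $[-ju,ju]$ of $E^\varphi$, and order intervals in the AL-space $E^\varphi$ are weakly compact. (Concretely, under the identification $E^\varphi\cong L^1$ such an interval is bounded and uniformly integrable, hence relatively weakly compact by Dunford--Pettis.) Note that $j$ is in general \emph{not} norm compact, which is the reason a single domination condition cannot suffice.

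Next I record two factorisations of each operator. Since $T_kE\subseteq E_u$, the closed graph theorem shows that $T_k$, regarded as a map into $E_u$, is a bounded operator $R_k\in\calL(E,E_u)$ with $T_k=\iota_u R_k$. On the other hand, the hypothesis $T_k'E'\subseteq(E')_\varphi$ together with Proposition~\ref{prop:domination-and-duality} shows that $T_k$ extends to an operator $\tilde T_k\in\calL(E^\varphi,E)$ with $T_k=\tilde T_k\iota_\varphi$. The key idea is to use the first factorisation for $T_1,T_3$ and the second for $T_2,T_4$, so that the embeddings line up into \emph{two} copies of $j$:
\[
  T_4T_3T_2T_1
  = \tilde T_4\,(\iota_\varphi\iota_u)\,(R_3\tilde T_2)\,(\iota_\varphi\iota_u)\,R_1
  = \tilde T_4\,j\,(R_3\tilde T_2)\,j\,R_1 ,
\]
where $R_3\tilde T_2\in\calL(E^\varphi,E_u)$.

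Now I would invoke the Dunford--Pettis property. Set $C:=(R_3\tilde T_2)\,j\in\calL(E_u)$; as the composition of the weakly compact operator $j$ with a bounded operator it is itself weakly compact. Then $jC\colon E_u\to E^\varphi$ is a composition of two weakly compact operators whose common space $E_u$ has the Dunford--Pettis property; consequently $jC$ is \emph{norm} compact, because on a Dunford--Pettis space every weakly compact operator maps relatively weakly compact sets to relatively norm-compact sets (pass to sequences via Eberlein--\v Smulian: a weakly convergent sequence in $E_u$ is mapped by $j$ to a norm convergent one). Hence $T_4T_3T_2T_1=\tilde T_4\,(jC)\,R_1$ is compact, being the composition of a compact operator with bounded operators on either side.

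The main obstacle is conceptual rather than computational. Because no order continuity of the norm is assumed, the embedding $j$ is only weakly compact, so any factorisation exhibiting a single copy of $j$ yields merely weak compactness of the product. The crux is therefore to arrange \emph{two} independent copies of $j$ around a space with the Dunford--Pettis property, and this is precisely what forces the need for four factors and for domination on both the operator and the dual side (indeed the argument uses only $T_1E,T_3E\subseteq E_u$ and $T_2'E',T_4'E'\subseteq(E')_\varphi$). The Dunford--Pettis property then converts the product of the two weakly compact maps into a compact one, which is what replaces the order-continuity hypothesis used in \cite[Theorem~2.3]{Daners2017}.
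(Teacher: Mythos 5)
Your proof is correct, and it routes the Dunford--Pettis argument through the opposite end of the factorisation~\eqref{eq:bl-inclusions} from the paper's proof. The paper, like you, extends operators to $\calL(E^\varphi,E)$ via Proposition~\ref{prop:domination-and-duality} and uses the closed graph theorem to factor through $E_u$, but then it works entirely inside the AL-space $E^\varphi$: it shows that $T_3T_2\colon E^\varphi \to E^\varphi$ is weakly compact (via \cite[Lemma~2.1]{Daners2017}, using that the AL-norm of $E^\varphi$ is order continuous), invokes the Dunford--Pettis property of the \emph{AL-space} $E^\varphi$ to conclude that $T_3T_2$ maps order intervals of $E^\varphi$ to relatively compact sets \cite[Proposition~3.7.11(ii)]{Meyer-Nieberg1991}, and then feeds it the order interval of $E^\varphi$ containing the image of the unit ball of $E$ under $T_1$, with $T_4$ returning everything to $E$. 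You instead isolate two copies of the canonical embedding $j\colon E_u \to E^\varphi$, note that $j$ is weakly compact because the unit ball of $E_u$ is the order interval determined by $u$ and order intervals in $E^\varphi$ are weakly compact, and then apply the Dunford--Pettis property of the \emph{AM-space} $E_u \cong C(K)$ (Grothendieck's theorem) to make $j$ completely continuous, so that $j$ composed with the weakly compact operator $C$ is norm compact. Thus the weak-compactness input is the same in both proofs --- order intervals in the AL-space --- but the Dunford--Pettis property is used on the $C(K)$-side in yours and on the $L^1$-side in the paper's, and your argument is somewhat more self-contained, replacing the two cited lemmas by Grothendieck's theorem and Eberlein--\v{S}mulian. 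Your symmetric factorisation $\tilde T_4\, j\, (R_3\tilde T_2)\, j\, R_1$ also gives a clean structural explanation of why four factors and two-sided domination are needed, and your observation that only $T_1E, T_3E \subseteq E_u$ and $T_2'E', T_4'E' \subseteq (E')_\varphi$ are used matches what the paper's proof implicitly uses as well. One small omission: the paper first treats real scalars and settles the complex case by a separate cited argument, whereas you do not distinguish scalars; this is harmless here, since complex $C(K)$ spaces also have the Dunford--Pettis property and the sets $\{g \in E^\varphi \colon |g| \le h\}$ remain convex, norm closed and contained in a sum of weakly compact real order intervals, so your argument runs verbatim in the complex case.
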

\begin{proof}
  We first prove the theorem in case that the underlying scalar field is real. According to Proposition~\ref{prop:domination-and-duality}, each operator $T_k$ extends to a bounded linear operator $T_k\in\calL(E^\varphi,E)$. Hence, $T_3T_2\colon E^\varphi\to E_u \subseteq (E^\varphi)_u$. Since the norm on $E^\varphi$ is additive on the positive cone, it easily follows that $E^\varphi$  has order continuous norm and thus, $T_3T_2\colon E^\varphi\to E^\varphi$ is weakly compact as proved in \cite[Lemma~2.1]{Daners2017}, that is, it maps the unit ball of $E^\varphi$ to a relatively weakly compact subset of $E^\varphi$. Using again that $E^\varphi$ is an AL-space, we conclude that $E^\varphi$ has the Dunford--Pettis property according to~\cite[Definition~3.7.6 and Proposition~3.7.9]{Meyer-Nieberg1991} and hence, $T_3T_2\colon E^\varphi\to E^\varphi$ is a Dunford--Pettis operator; see \cite[Definition~3.7.6(i)]{Meyer-Nieberg1991}. By~\cite[Proposition~3.7.11(ii)]{Meyer-Nieberg1991} this implies that $T_3T_2$ maps order intervals in $E^\varphi$ to relatively compact subsets of $E^\varphi$. 
	
  As $T_1E\subseteq E_u$ the closed graph theorem implies that $T_1\in\calL(E,E_u)$. Hence, $T_1$ maps the unit ball of $E$ into an order interval of $E$ and hence into an order interval of $E^\varphi$. As a consequence $T_3T_2T_1$ maps the unit ball of $E$ into a relatively compact subset of $E^\varphi$. Finally, using that $T_4\in\calL(E^\varphi,E)$ we conclude that $T_4T_3T_2T_1\in\calL(E)$ is compact.
	
  The case of complex scalars can easily derived from the real case by a similar argument as detailed in the last part of the proof of \cite[Lemma~9.2.1]{GlueckDISS}.
\end{proof}
\begin{corollary}
  \label{cor:domination-and-compactness}
  Let $E$ be a real or complex Banach lattice, let $u \in E_+$ and let $\varphi \in E_+'$ be strictly positive. Let $T\in\calL(E)$ such that $TE \subseteq E_u$ and $T'E' \subseteq (E')_\varphi$. Then $T^4$ is a compact operator from $E$ to $E$.
\end{corollary}

We close this section with the remark that there is a theory of so-called \emph{cone absolutely summing} and \emph{majorizing} linear operators which is related to the topics discussed above. For details we refer the interested reader for instance to \cite[Section~IV.3]{Schaefer1974} and \cite[Section~2.8]{Meyer-Nieberg1991}.

\section{Eventually positive semigroups} \label{section:eventually-positive-semigroups}
Theorem~\ref{thm:criterion-semigroups} below is our main result. To state it in a convenient form, we use the following conventions. 

Let $E$ be a complex Banach space. For a vector $u \in E$ and a functional $\varphi$ in the dual space $E'$ we use the common notation $u \otimes \varphi$ for the operator $E \to E$ given by $(u \otimes \varphi)f = \langle \varphi, f\rangle u$ for all $f \in E$. If $A\colon E \supseteq D(A) \to E$ is a linear operator, then $\spb(A) := \sup\{\repart \lambda\colon \lambda \in \spec(A)\} \in [-\infty,\infty]$ is called the \emph{spectral bound}, where $\spec(A)$ is the spectrum of $A$. The spectral bound  $\spb(A)$  is called a \emph{dominant spectral value} of $A$ if $\spb(A)\in\spec(A)$ and $\repart \lambda < \spb(A)$ for all further $\lambda\in\spec(A)$. Now, let $E$ be a complex Banach lattice and let $(e^{tA})_{t \ge 0}$ be a $C_0$-semigroup on $E$. We say that this semigroup is \emph{real} if each operator $e^{tA}$ ($t \ge 0$) leaves the real part $E_\bbR$ of $E$ invariant. This is equivalent to stipulating that the operator $A$ is \emph{real}, meaning that $D(A) = D(A) \cap E_\bbR + i D(A) \cap E_\bbR$ and that $A$ maps $D(A) \cap E_\bbR$ to $E_\bbR$.

If $(e^{tA})_{t \ge 0}$ is a $C_0$-semigroup on a Banach space $E$, then the dual semigroup $\big((e^{tA})'\big)_{t \ge 0}$ on the dual Banach space $E'$ is weak${}^*$-continuous. The dual operator $A'$ is the \emph{weak${}^*$-generator} of the dual semigroup (see \cite[Section~II.2.5]{Engel2000} for details), so we use the notation $\big((e^{tA})'\big)_{t \ge 0} =: (e^{tA'})_{t \ge 0}$. Recall that the dual semigroup is automatically a $C_0$-semigroup in case that $E$ is reflexive.

\begin{theorem}
  \label{thm:criterion-semigroups}
  Let $(e^{tA})_{t \ge 0}$ be a real $C_0$-semigroup on a complex Banach lattice $E$ whose generator $A$ has non-empty spectrum $\spec(A)$. Let $u \in E_+$ be a quasi-interior point, let $\varphi \in E'_+$ be a strictly positive functional and assume that the following assumptions are fulfilled:
  \begin{enumerate}[\upshape (a)]
  \item There exist times $t_1,t_2 \ge 0$ for which we have $e^{t_1A}E \subseteq E_u$ and $e^{t_2A'}E' \subseteq (E')_{\varphi}$.
  \item The spectral bound $\spb(A)$ is a dominant spectral value of $A$, the eigenspace $\ker (\spb(A) - A)$ is one-dimensional and contains a vector $v \gg_u 0$ and the dual eigenspace $\ker (\spb(A) - A')$ contains a functional $\psi \gg_\varphi 0$.
  \end{enumerate}
  Then there exist $t_0 \ge 0$ and $\varepsilon > 0$ such that $e^{tA} \ge \varepsilon (u \otimes \varphi)$ for all $t \ge t_0$. In particular, the semigroup $(e^{tA})_{t \ge 0}$ is uniformly eventually strongly positive with respect to $u$. Moreover, the semigroup is eventually compact, that is, $e^{tA}$ is a compact operator for all sufficiently large $t$.
\end{theorem}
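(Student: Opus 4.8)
The plan is to extract eventual compactness from Corollary~\ref{cor:domination-and-compactness}, to use it to install a rank-one Riesz projection at the dominant eigenvalue, and then to promote a norm asymptotic of the semigroup into the desired order estimate by factoring through the sandwich $E_u \hookrightarrow E \hookrightarrow E^\varphi$ furnished by~(a). Since replacing $A$ by $A - \spb(A)\id$ only multiplies the semigroup by the positive scalar $e^{-\spb(A)t}$ and alters neither the hypotheses nor any positivity statement, I would assume throughout that $\spb(A) = 0$; and as $A$ is real and $0$ is real, the projection and eigenvectors below may be taken in the real part of $E$, so all order relations make sense.

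For the eventual compactness, the semigroup law together with~(a) yields $e^{tA}E \subseteq E_u$ for $t \ge t_1$ and $e^{tA'}E' \subseteq (E')_\varphi$ for $t \ge t_2$. Thus for any $s \ge \max\{t_1,t_2\}$ the operator $T := e^{sA}$ meets both hypotheses of Corollary~\ref{cor:domination-and-compactness}, so $T^4 = e^{4sA}$ is compact and hence $e^{tA}$ is compact for all large $t$. This is the last assertion of the theorem, and it places us among eventually compact semigroups, for which the spectral mapping theorem holds, $\spec(A)$ has at most finitely many points in each right half-plane, and every spectral value is an isolated pole of the resolvent with finite-dimensional spectral subspace.

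Next I would pin down the projection. As $\spb(A) = 0$ is a dominant spectral value, it is a pole of $R(\phdot, A)$ with a finite-rank Riesz projection $P$, and $E = \im P \oplus \ker P$ reduces $A$. The hypothesis $\dim\ker A = 1$ from~(b) means that $A$ restricted to the generalized eigenspace $\im P$ is a single nilpotent Jordan block with $\ker A \cap \im P = \operatorname{span} v$. Here positivity is decisive: from $v \ge \delta u$ and $\psi \ge \eta\varphi$ (these are $v \gg_u 0$ and $\psi \gg_\varphi 0$) and strict positivity of $\varphi$ we get $\langle\psi,v\rangle \ge \eta\langle\varphi,v\rangle > 0$. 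Because $\psi$ annihilates the range of $A|_{\im P}$ while $v$ spans its kernel, a nonzero pairing $\langle\psi,v\rangle$ is impossible for a block of size $\ge 2$; hence the block has size one, $\im P = \operatorname{span} v$, and
\[
  P = \frac{1}{\langle\psi,v\rangle}\, v \otimes \psi .
\]
Consequently $P$ dominates a multiple of $u\otimes\varphi$: for $0 \le f$ one has $Pf = \langle\psi,v\rangle^{-1}\langle\psi,f\rangle\, v \ge c_0\,\langle\varphi,f\rangle\, u$ with $c_0 := \delta\eta/\langle\psi,v\rangle > 0$.

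The remaining, and genuinely delicate, step is to control the complementary part $e^{tA}(I-P)$ in the order sense. On $\ker P$ the generator has spectral bound $\omega_1 := \sup\{\repart\lambda : \lambda \in \spec(A)\setminus\{0\}\}$, which is $<0$ by dominance and eventual compactness, so $\|e^{tA}(I-P)\|_{\calL(E)} \le M e^{\omega t}$ for some fixed $\omega \in (\omega_1,0)$. The obstacle is that norm smallness does not imply order-positivity, so this estimate alone is useless; overcoming it is exactly what~(a) is for. By Proposition~\ref{prop:domination-and-duality} the dual condition makes $e^{t_2 A}$ extend to a bounded operator $E^\varphi \to E$, while $e^{t_1 A}\colon E \to E_u$ is bounded into the AM-space $E_u$. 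Using that $P$ commutes with the semigroup, I factor, for $t \ge t_1+t_2$,
\[
  e^{tA}(I-P) = e^{t_1 A}\,\bigl[\, e^{(t-t_1-t_2)A}(I-P)\,\bigr]\, e^{t_2 A},
\]
whence $\|e^{tA}(I-P)\|_{E^\varphi \to E_u} \le C e^{\omega t}$. Now the AM/AL geometry closes the argument: for $0 \le f \in E$ the vector $g := e^{tA}(I-P)f$ lies in $E_u$, so $|g| \le \|g\|_u\, u \le C e^{\omega t}\langle\varphi,f\rangle\, u$ (using $\|f\|_\varphi = \langle\varphi,f\rangle$), and therefore
\[
  e^{tA}f = Pf + g \ge \bigl(c_0 - C e^{\omega t}\bigr)\langle\varphi,f\rangle\, u .
\]
Picking $t_0$ with $C e^{\omega t_0} \le c_0/2$ gives $e^{tA} \ge \tfrac{c_0}{2}(u\otimes\varphi)$ for all $t \ge t_0$, and strict positivity of $\varphi$ turns this into $e^{tA}f \gg_u 0$ for every nonzero $f \in E_+$, which is the uniform eventual strong positivity. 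For general $\spb(A)$ the same computation delivers the estimate with the harmless factor $e^{\spb(A)t}$.
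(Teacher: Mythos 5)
Your proof is correct and follows essentially the same route as the paper: eventual compactness via Corollary~\ref{cor:domination-and-compactness}, reduction to $\spb(A)=0$, identification of the rank-one spectral projection $P = \langle\psi,v\rangle^{-1}\, v\otimes\psi$, and the factorization $e^{tA}(I-P)=e^{t_1A}\,e^{(t-t_1-t_2)A}(I-P)\,e^{t_2A}$ regarded as an operator $E^\varphi\to E_u$ with decaying norm, which is precisely the paper's argument (carried out there with the equivalent data $v$, $\psi$ in place of $u$, $\varphi$ via~\eqref{eq:equivalent-vectors}). The only real deviation is cosmetic: where the paper cites \cite[Corollary~3.3]{Daners2016a} to obtain that the pole order is one, you derive it directly from $\langle\psi,v\rangle>0$ by the Jordan-block pairing argument, a legitimate self-contained substitute.
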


It is instructive to compare Theorem~\ref{thm:criterion-semigroups} with \cite[Theorem~5.2 and Corollary~3.3]{Daners2016a} where a similar criterion is given for individual eventual strong positivity. What gives us uniform eventual positivity here are the assumptions on the dual semigroup, namely that $e^{t_2A'}E \subseteq E_\varphi$ and that the eigenvector $\psi$ of $A'$ is not only strictly positive, but fulfils $\psi \gg_\varphi 0$.

Before we prove Theorem~\ref{thm:criterion-semigroups}, a few comments on its assumptions are in order. The assumption that the semigroup be real is not particularly restrictive. It is for instance fulfilled by each semigroup which is generated by a differential operator with real coefficients. Assumption~(a) is more interesting and can be considered as a kind of abstract Sobolev embedding theorem. It is important to note that condition~(a) implies a strong relationship between the vectors $u$ and $v$, as well as between $\varphi$ and $\psi$. By assumption we have $v \gg_u 0$ and $\psi \gg_\varphi 0$. On the other hand, it follows from $e^{t_1A}E \subseteq E_u$ that $e^{t_1 \spb(A)} v = e^{t_1A}v \in E_u$. Hence, we also have the converse estimate $u \gg_v 0$. Similarly, we see that $\varphi \gg_\psi 0$. Thus, there exist real numbers $c_1,c_2,d_1,d_2 > 0$ such that
\begin{equation}
  \label{eq:equivalent-vectors}
  c_1 u \le v \le c_2 u \quad \text{and} \quad d_1 \varphi \le \psi \le d_2 \varphi.
\end{equation}
Thus, we may think of $u$ and $\varphi$ as some kind of ``deformed'' eigenvectors of $A$ and $A'$, and in the statement of the theorem we could replace all occurrences of $u$ and $\varphi$ with $v$ and $\psi$ throughout. The reason why we consider the additional vectors $u$ and $\varphi$ is that, for concrete operators $A$, it is often difficult to compute the eigenvectors $v$ and $\psi$ explicitly. On the other hand, there is often a quite canonical choice for the vectors $u$ and $\varphi$ which is essentially determined by the boundary conditions encoded in the domain $D(A)$. We refer the reader to Section~\ref{section:applications} where this is demonstrated in several examples.

Assumption~(b) in Theorem~\ref{thm:criterion-semigroups} can be checked by several methods, depending on the particular operator under consideration. On some occasions an explicit computation or estimate of the eigenvectors might be possible while in other examples it is possible to compute the resolvents $\Res(\lambda,A):=(\lambda I-A)^{-1}$ of $A$ and of $A'$ for some $\lambda>\spb(A)$ and to employ results about eventual positivity of resolvents, such as for instance \cite[Proposition~3.2]{DanersPERT} and~\cite[Theorem~4.4 and Corollary~3.3]{Daners2016a}. Again, we refer to Section~\ref{section:applications} for examples.

\begin{proof}[Proof of Theorem~\ref{thm:criterion-semigroups}]
  We have $e^{(t_1+t_2)A}E \subseteq E_u$ and $e^{(t_1+t_2)A'}E' \subseteq (E')_\varphi$ and hence $e^{4(t_1+t_2)A}$ is compact according to Corollary~\ref{cor:domination-and-compactness}. This shows that the semigroup is eventually compact.
	 
  For the rest of the proof we assume without loss of generality that $\spb(A) = 0$. Since the semigroup is eventually compact, the spectral value $0$ is a pole of the resolvent $\Res(\phdot,A)$ \cite[Corollary~V.3.2]{Engel2000} and according to \cite[Corollary~3.3]{Daners2016a} it follows from assumption~(b) that the corresponding pole order equals $1$. Let $P$ be the spectral projection of $A$ associated with $0$. After rescaling $v$ such that $\langle \psi, v\rangle = 1$ we have $P = \psi \otimes v$.
	
  Now we prove that $e^{(t+t_1+t_2)A}(I-P)$ extends to a continuous linear operator from $E^\psi$ to $E_v$ whose norm converges to $0$ as $t \to \infty$. To this end, first note that $e^{tA}(1-P)$ converges to $0$ with respect to the operator norm as $t \to \infty$; this follows from the assumption that $\spb(A) = 0$ be a dominant spectral value and from the fact that the semigroup is eventually compact \cite[Corollary~V.3.2]{Engel2000}. Moreover, using \eqref{eq:equivalent-vectors} we observe that $E_u = E_v$ and $E^\psi = E^\varphi$, which means that the vector spaces coincide and that the norms are equivalent. Hence, $e^{t_1A}$ is an operator from $E$ to $E_v$ which is continuous due to the closed graph theorem, and $e^{t_2A}$ extends to a continuous operator from $E^\psi$ to $E$ according to Proposition~\ref{prop:domination-and-duality}. This proves that $e^{(t+t_1+t_2)A}(1-P)$ extends to a continuous operator from $E^\psi$ to $E_v$ for every $t \ge 0$. We therefore have
  \begin{displaymath}
    \|e^{(t+t_1+t_2)A}(1-P)\|_{E^\psi \to E_v}
    \leq \|e^{t_1A}\|_{E \to E_v} \|e^{tA}(1-P)\|_{E \to E}
    \|e^{t_2A}\|_{E^\psi \to E} \to 0
  \end{displaymath}
  as $t \to \infty$. Fix $\delta \in (0,1)$. For all sufficiently large $t$, $t \ge \tau$ say, we thus obtain $\|e^{(t+t_0+t_1)A}(1-P)\|_{E^\psi \to E_v} \le \delta$. Hence
  \begin{displaymath}
    \|e^{tA}(1-P)f\|_v \le  \delta\|f\|_\psi
    = \delta \langle \psi, f \rangle 
  \end{displaymath}
  for all $t \ge t_0 + t_1 + \tau$ and all $f \in E_+$. Since our semigroup is real, this implies that
  \begin{displaymath}
    |e^{tA}(1-P)f| \le \delta \langle \psi, f\rangle v,
    \quad\text{hence}\quad
    e^{tA}(1-P)f \ge - \delta \langle \psi, f \rangle v.
  \end{displaymath}
  for all such $t$ and $f$. Note that $e^{tA}f = \langle \psi, f\rangle v + e^{tA}(1-P)f$ for all times $t \ge 0$ and all $f \in E$ and hence we conclude that $e^{tA}f \ge (1-\delta)\langle \psi, f \rangle v = (1-\delta) (v \otimes \psi)f$ for all $f \in E_+$ and all $t \ge t_0 + t_1 + \tau$. This proves the assertion with $\varepsilon = c_1d_1 (1-\delta)$, where $c_1$ and $d_1$ are taken from~\eqref{eq:equivalent-vectors}.
\end{proof}

\begin{remark}
	The proof of Theorem~\ref{thm:criterion-semigroups} actually shows that, for each fixed $\delta \in (0,1)$, we have $e^{tA} \ge (1-\delta) (v \otimes \psi)$ for all sufficiently large times $t$.
\end{remark}

Let us now briefly discuss how to check the conditions $e^{t_1A}E \subseteq E_u$ and $e^{t_2A'}E' \subseteq (E')_{\varphi}$ in Theorem~\ref{thm:criterion-semigroups}(a). For further information concerning this kind of conditions we also refer to \cite{Daners2017}; compare also \cite[Remark~9.3.4]{GlueckDISS}

\begin{remark}
  \label{rem:domination-condition}
  Let $(e^{tA})_{t \ge 0}$ be a $C_0$-semigroup on a complex Banach space $E$, let $u \in E_+$ be a quasi-interior point and let $\varphi \in E'_+$ be a strictly positive functional.
  \begin{enumerate}[(a)]
  \item Assume that the semigroup is analytic and consider the subspace
    \begin{displaymath}
      D(A^\infty) := \bigcap_{n \in \bbN} D(A^n)
    \end{displaymath}
    of $E$. If $D(A^\infty) \subseteq E_u$, then $e^{tA}E \subseteq D(A^\infty) \subseteq E_u$ for all $t > 0$.
		
    Moreover, note that the dual semigroup $(e^{tA'})_{t \ge 0}$ is also analytic (although it might not be a $C_0$-semigroup). If we use that $A'$ is the weak$^*$-generator of the dual semigroup, then it is easy to see that also $e^{tA'}E' \subseteq D\big((A')^\infty\big)$ for all $t > 0$. Hence, if $D\big((A')^\infty\big) \subseteq E'_\varphi$, then $e^{tA'}E' \subseteq E'_\varphi$ for all $t > 0$.
		
  \item Now only assume that $(e^{tA})_{t \ge 0}$ is eventually differentiable, that is, there exists a time $t_0 \ge 0$ such that $e^{tA}E \subseteq D(A)$ for all $t > t_0$. If there exists an integer $n \in \bbN$ for which we have $D(A^n)\subseteq E_u$, then it follows that $e^{tA}E \subseteq D(A^n) \subseteq E_u$ for all $t > nt_0$.
		
    We also note that the mapping $t \to e^{tA}$ is differentiable on $(t_0,\infty)$ with respect to the operator norm, see \cite[Exercise~II.4.16(1)]{Engel2000}. Hence, the same is true for the mapping $t \mapsto e^{tA'}$ and thus, $e^{tA'}E' \subseteq D(A')$ for all $t > t_0$ as $A'$ is the weak${}^*$-generator of $(e^{tA'})_{t \ge 0}$. This implies that $e^{tA'}E' \subseteq D\big((A')^n\big)$ for all $n \in \bbN$ and all $t > nt_0$. Therefore, we have $e^{tA'}E' \subseteq E'_\varphi$ for all sufficiently large $t$ in case that $D\big((A')^n\big)$ is contained in $E'_\varphi$ for at least one integer $n \in \bbN$.
		
  \item Let $E = C(K)$ be the space of continuous functions on a compact Hausdorff space $K$. The fact that $u$ is a quasi-interior point of $E_+$ implies that, actually, $u \ge \varepsilon \one_K$ for some $\varepsilon > 0$. Hence, $E_u = E$, so the condition $e^{tA}E \subseteq E_u$ is always fulfilled for each $t \ge 0$ in this case.
		
  \item Somewhat dually to (c), let $E = L^1(\Omega,\mu;\bbC)$ for a $\sigma$-finite measure space $(\Omega,\mu)$. Let $\one \in L^\infty(\Omega,\mu;\bbC) = E'$ be the constant function with value $1$. In contrast to (c), the assumption that $\varphi$ be strictly positive does not necessarily imply that $\varphi \ge \varepsilon \one$ for some $\varepsilon > 0$ (in fact, $\varphi$ is a strictly positive functional if and only if $\varphi(\omega) > 0$ for almost every $\omega \in \Omega$). If, however, there exists a number $\varepsilon > 0$ such that $\varphi \ge \varepsilon \one$, then we have $E'_\varphi = E'$, so the condition $e^{tA'}E' \subseteq E'_\varphi$ automatically holds for every $t \ge 0$.
  \end{enumerate}
\end{remark}

Let us give a few consequences of Theorem~\ref{thm:criterion-semigroups} in the following corollaries. The first corollary deals with the relation between individual and uniform eventual positivity.

\begin{corollary}
  \label{cor:ind-vs-unif}
  Let $(e^{tA})_{t \ge 0}$ be a real $C_0$-semigroup on a complex Banach lattice $E$ whose generator $A$ has non-empty spectrum $\spec(A)$. Let $u \in E_+$ be a quasi-interior point, let $\varphi \in E'_+$ be a strictly positive functional and assume that there exist times $t_1,t_2 \ge 0$ for which we have $e^{t_1A}E \subseteq E_u$ and $e^{t_2A'}E' \subseteq (E')_{\varphi}$. 
	
  If $(e^{tA})_{t \ge 0}$ is individually eventually strongly positive with respect to $u$ and if there exists a functional $\psi \gg_\varphi 0$ in $\ker(\spb(A) - A')$, then $(e^{tA})_{t \ge 0}$ is uniformly eventually strongly positive with respect to $u$.
\end{corollary}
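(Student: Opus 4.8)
The plan is to deduce the corollary directly from Theorem~\ref{thm:criterion-semigroups} by checking that all of its hypotheses hold. Assumption~(a) of that theorem is assumed here verbatim, and the dual part of assumption~(b)---the existence of a functional $\psi \gg_\varphi 0$ in $\ker(\spb(A) - A')$---is also among our hypotheses. Hence the whole task reduces to establishing the remaining, primal, conditions in assumption~(b): that $\spb(A)$ is a dominant spectral value of $A$, that the eigenspace $\ker(\spb(A) - A)$ is one-dimensional, and that it contains a vector $v \gg_u 0$.

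First I would exploit the domination conditions to gain compactness. Since $e^{(t_1+t_2)A}E \subseteq E_u$ and $e^{(t_1+t_2)A'}E' \subseteq (E')_\varphi$, Corollary~\ref{cor:domination-and-compactness} shows that $e^{4(t_1+t_2)A}$ is compact, so the semigroup is eventually compact. Because $\spec(A)$ is assumed to be non-empty, eventual compactness forces $\spb(A)$ to be finite, to belong to $\spec(A)$, and to be a pole of the resolvent $\Res(\phdot,A)$; moreover only finitely many spectral values lie in any right half-plane $\{\repart\lambda \ge c\}$ (see \cite[Corollary~V.3.2]{Engel2000}). This provides exactly the spectral regularity required to invoke a Perron--Frobenius-type analysis.

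The central step is then to extract the primal spectral structure from individual eventual strong positivity. Here I would appeal to the spectral theory of individually eventually strongly positive semigroups developed in \cite{Daners2016a}: the hypotheses of that theory are met, since the semigroup is real, individually eventually strongly positive with respect to $u$, satisfies the domination condition $e^{t_1A}E \subseteq E_u$, and $\spb(A)$ is a pole of the resolvent. The conclusions of \cite[Theorem~5.2 and Corollary~3.3]{Daners2016a} then yield that the peripheral spectrum reduces to $\{\spb(A)\}$, i.e.\ $\spb(A)$ is a dominant spectral value, that it is geometrically simple so that $\ker(\spb(A) - A)$ is one-dimensional, and that this eigenspace is spanned by a vector $v$ with $v \gg_u 0$. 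This is the step I expect to be the main obstacle, since it is precisely where the positivity of the semigroup (rather than mere compactness) must be converted into rigidity of the leading eigenvalue and strong positivity of its eigenvector; care is needed to ensure that eventual positivity genuinely excludes further peripheral eigenvalues and produces $v \gg_u 0$ rather than only $v \ge 0$.

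With these facts in hand, both assumptions of Theorem~\ref{thm:criterion-semigroups} are verified: assumption~(a) holds by hypothesis, and assumption~(b) holds by the previous step together with the given $\psi \gg_\varphi 0$. Applying Theorem~\ref{thm:criterion-semigroups} then produces $t_0 \ge 0$ and $\varepsilon > 0$ with $e^{tA} \ge \varepsilon(u \otimes \varphi)$ for all $t \ge t_0$, and in particular the uniform eventual strong positivity of $(e^{tA})_{t \ge 0}$ with respect to $u$, as claimed.
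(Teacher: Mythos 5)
Your proposal is correct and follows essentially the same route as the paper's own proof: compactness of $e^{4(t_1+t_2)A}$ via Corollary~\ref{cor:domination-and-compactness}, eventual compactness giving the pole/finiteness properties of the spectrum, then \cite[Theorem~5.2 and Corollary~3.3]{Daners2016a} to obtain dominance of $\spb(A)$, one-dimensionality of $\ker(\spb(A)-A)$ and an eigenvector $v \gg_u 0$, and finally an application of Theorem~\ref{thm:criterion-semigroups}. The only cosmetic difference is that the paper makes explicit the intermediate fact that the spectral projection $P$ satisfies $Pf \gg_u 0$ for all $f \in E_+ \setminus \{0\}$, which is what licenses the appeal to \cite[Corollary~3.3]{Daners2016a}.
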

\begin{proof}
  We only have to show that assumption~(b) of Theorem~\ref{thm:criterion-semigroups} is fulfilled. As before it follows from Corollary~\ref{cor:domination-and-compactness} that the operator $e^{4(t_1+t_2)A}$ is compact, so the semigroup $(e^{tA})_{t \ge 0}$ is eventually compact. Thus, each spectral value of $A$ is a pole of its resolvent, and every vertical strip in the complex plane which has finite width contains at most finitely many spectral values of $A$ \cite[Corollary~V.3.2]{Engel2000}. Hence, the assumptions of \cite[Theorem~5.2]{Daners2016a} are fulfilled. By part~(ii) of this theorem we conclude that $\spb(A)$ is a dominant spectral value of $A$ and that the spectral projection $P$ associated with $\spb(A)$ fulfils $Pf \gg_u 0$ for each $f \in E_+ \setminus \{0\}$. Due to this property of the spectral projection we can apply \cite[Corollary~3.3]{Daners2016a} which yields that the eigenspace $\ker(\spb(A)-A)$ is one-dimensional and that it contains a vector $v \gg_u 0$. Hence, assumption~(b) of Theorem~\ref{thm:criterion-semigroups} is fulfilled and the assertions follows.
\end{proof}
The second corollary deals with self-adjoint semigroups on $L^2$-spaces. Note that, until now, we have always dealt with the \emph{Banach space dual} of the generator $A$; however, the term \emph{self-adjointness} refers to the \emph{Hilbert space adjoint} of operators. Thus, for a proper understanding of the corollary and its proof, a brief discussion of the relation between those two notions is in order.

Let $(\Omega,\mu)$ be a $\sigma$-finite measure space and set $H := L^2(\Omega,\mu;\bbC)$; we consider a densely defined linear operator $A: H \supseteq D(A) \to H$ with Banach space dual $A'\colon H' \supseteq D(A') \to H'$. If we identify $H'$ with $H$ by means of the Riesz--Fr{\'e}chet representation theorem, the operator $A'$ induces an operator $A^*\colon H \supseteq D(A^*) \to H$ which is called the Hilbert space adjoint of $A$. On the other hand, there is also a ``Banach space way'' (instead of a ``Hilbert space way'') to identify $H'$ with $H$, namely we may identify each vector $z \in H$ with the functional $H \ni x \mapsto \int_\Omega z x\,\dx\mu \in \bbC$; this identification is a linear (instead of an anti-linear) isomorphism between $H$ and $H'$ and it is compatible with the usual identification of $(L^p(\Omega,\mu;\bbC)'$ and $L^p(\Omega,\mu;\bbC)$ (where $1/p + 1/q = 1$). By means of this identification, $A'$ also induces an operator on $H$ which we again denote, by abuse of notation, as $A'$, and which we also refer to as the Banach space dual of $A$. It is not difficult to see that
\begin{displaymath}
  D(A^*) = \{u \in H\colon\overline{u} \in D(A')\},
  \quad
  A^* u = \overline{A' \, \overline{u}}
  \quad \text{for all $u \in D(A^*)$.}
\end{displaymath}
In particular, we have $A^* = A'$ in case that $A$ (and thus $A'$) is real.
\begin{corollary}
  \label{cor:self-adjoint-case}
  Let $(\Omega,\mu)$ be a $\sigma$-finite measure space and let $(e^{tA})_{t \ge 0}$ be a real $C_0$-semigroup on $H := L^2(\Omega,\mu;\bbC)$ with self-adjoint generator $A$. Let $u \in H_+$ be a quasi-interior point which means that $u(\omega) > 0$ for almost all $\omega \in \Omega$. Assume that $e^{t_1A}\subseteq H_u$ for some $t_1>0$ or that $D(A^\infty) \subseteq H_u$. Then the following assertions are equivalent:
  \begin{enumerate}[\upshape (i)]
  \item The semigroup $(e^{tA})_{t \ge 0}$ is uniformly eventually strongly positive with respect to $u$.
  \item The semigroup $(e^{tA})_{t \ge 0}$ is individually eventually strongly positive with respect to $u$.
  \item The space $\ker(\spb(A) - A)$ is one-dimensional and contains a vector $v \gg_u 0$.
  \end{enumerate}
\end{corollary}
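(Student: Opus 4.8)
The plan is to reduce the whole equivalence to Theorem~\ref{thm:criterion-semigroups} and to the argument already carried out in the proof of Corollary~\ref{cor:ind-vs-unif}, the key point being that self-adjointness converts the hypotheses on the \emph{dual} objects into hypotheses on $A$ itself. First I would record two reductions. Since $A$ is self-adjoint and generates a $C_0$-semigroup, its spectrum is real and bounded above, so $A$ generates an analytic semigroup; by Remark~\ref{rem:domination-condition}(a) the alternative hypothesis $D(A^\infty) \subseteq H_u$ already yields $e^{tA}H \subseteq H_u$ for all $t > 0$. Hence in either case we have $e^{t_1A}H \subseteq H_u$ for some $t_1 > 0$. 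Next I would use the duality discussion preceding the corollary: let $J\colon H \to H'$ be the \emph{Banach space} identification sending $z \in H$ to the functional $y \mapsto \int_\Omega zy\,\dx\mu$, which is an isometric lattice isomorphism, and set $\varphi := Ju \in H'_+$. As $u(\omega) > 0$ for a.e.\ $\omega$, the functional $\varphi$ is strictly positive, and since $J$ is a lattice isomorphism one checks directly that $J$ maps $H_u$ onto $(H')_\varphi$. Because $A$ is real and self-adjoint we have $A' = A^* = A$ under $J$, so the dual semigroup corresponds to $(e^{tA})_{t \ge 0}$ itself; consequently $e^{t_1A'}H' \subseteq (H')_\varphi$ holds automatically. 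Thus both domination conditions of Theorem~\ref{thm:criterion-semigroups}(a) and of Corollary~\ref{cor:ind-vs-unif} hold with $t_2 = t_1$, and Corollary~\ref{cor:domination-and-compactness} shows that the semigroup is eventually compact.

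The implication (i)$\Rightarrow$(ii) is immediate, since a uniform time $t_0$ serves in particular as an individual one for each $f$. For (ii)$\Rightarrow$(iii) I would repeat the reasoning from the proof of Corollary~\ref{cor:ind-vs-unif}. Eventual compactness implies that every spectral value of $A$ is a pole of $\Res(\phdot,A)$ and that each vertical strip of finite width contains only finitely many of them, so the hypotheses of \cite[Theorem~5.2]{Daners2016a} are met (the spectrum being automatically non-empty as $A$ is self-adjoint on $H \neq \{0\}$). Part~(ii) of that theorem, applied to the individually eventually strongly positive semigroup, yields that $\spb(A)$ is a dominant spectral value and that the associated spectral projection $P$ satisfies $Pf \gg_u 0$ for every $f \in H_+ \setminus \{0\}$; feeding this into \cite[Corollary~3.3]{Daners2016a} gives that $\ker(\spb(A) - A)$ is one-dimensional and contains a vector $v \gg_u 0$, which is precisely (iii).

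For (iii)$\Rightarrow$(i) I would verify the hypotheses of Theorem~\ref{thm:criterion-semigroups}. Assumption~(a) holds by the first paragraph. For assumption~(b), self-adjointness forces $\spec(A) \subseteq \bbR$; combined with eventual compactness this makes $\spec(A)$ a discrete subset of $\bbR$ that is bounded above and has only finitely many points above any level, so its maximum $\spb(A)$—an eigenvalue by (iii)—is isolated and strictly exceeds $\repart\lambda$ for every other $\lambda \in \spec(A)$, i.e.\ $\spb(A)$ is a dominant spectral value. The eigenspace $\ker(\spb(A) - A)$ is one-dimensional and contains $v \gg_u 0$ by (iii). Finally, since $A' = A$ under $J$, the functional $\psi := Jv$ lies in $\ker(\spb(A) - A')$, and $v \ge \varepsilon u$ gives $\psi = Jv \ge \varepsilon Ju = \varepsilon \varphi$, so $\psi \gg_\varphi 0$. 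Thus all hypotheses of Theorem~\ref{thm:criterion-semigroups} are in place and (i) follows.

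The main obstacle is conceptual rather than computational: one must keep the \emph{Banach space} dual—which interacts well with the lattice structure and with the ideals $H_u$ and $(H')_\varphi$—cleanly separated from the \emph{Hilbert space} adjoint that defines self-adjointness, and use the identity $A' = A^* = A$ (valid because $A$ is real) to transport the eigenvector $v$ to a dual eigenfunctional $\psi = Jv$ with $\psi \gg_\varphi 0$. Once this translation is set up correctly, the equivalence reduces to invoking Theorem~\ref{thm:criterion-semigroups} in one direction and the Perron--Frobenius-type results of \cite{Daners2016a} in the other.
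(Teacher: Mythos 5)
Your proposal is correct and follows essentially the same route as the paper: it reduces (iii)$\Rightarrow$(i) to Theorem~\ref{thm:criterion-semigroups} via the identity $A' = A^* = A$ (using $u$, i.e.\ $\varphi = Ju$, as the strictly positive functional and transporting $v$ to $\psi = Jv$), treats (i)$\Rightarrow$(ii) as trivial, and obtains (ii)$\Rightarrow$(iii) from \cite[Theorem~5.2 and Corollary~3.3]{Daners2016a}, exactly as the paper does. Your only deviations are elaborations, not gaps---you spell out the identification $J$ explicitly and derive the dominance of $\spb(A)$ from eventual compactness, although for a self-adjoint generator dominance already follows from the spectrum being real, closed and bounded above.
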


Note that assertion~(iii) in the above corollary makes sense since the spectrum of a self-adjoint operator is always non-empty, so $\spb(A) \in \bbR$. Our proof of Corollary~\ref{cor:self-adjoint-case} below is a simple consequence of Theorem~\ref{thm:criterion-semigroups}. However, the corollary can also be shown by different methods, using the theory of Hilbert--Schmidt operators \cite[Theorem~10.2.1]{GlueckDISS}. We note in passing that Corollary~\ref{cor:self-adjoint-case} can be formulated in a slightly more abstract setting, using the theory of Hilbert lattices, which is equivalent to considering $L^2$-space over arbitrary measure spaces. However, the quasi-interior points in such spaces are not easy to characterise, so we stick to the case of $\sigma$-finite measure spaces here which is most important in applications.

\begin{proof}[Proof of Corollary~\ref{cor:self-adjoint-case}]
  To be able to apply Theorem~\ref{thm:criterion-semigroups} we need $e^{t_1A}H\subseteq H_u$ for some $t_1>0$. This is either true by assumption or can be deduced if $D(A^\infty)\subseteq H_u$: since the semigroup is self-adjoint, it is analytic and hence, $e^{tA}H \subseteq D(A^\infty) \subseteq H_u$ for each $t > 0$. In either case the vector $u$ also defines a strictly positive functional on $H$. As noted above, we have $A' = A^*$ and thus $A' = A$ and so Assumption~(a) of Theorem~\ref{thm:criterion-semigroups} is satisfied with $t_1=t_2$. Therefore, the implication ``(iii) $\Rightarrow$ (i)'' follows from Theorem~\ref{thm:criterion-semigroups} since (iii) corresponds to Assumption~(b). The implication ``(i) $\Rightarrow$ (ii)'' is trivial, and the implication ``(ii) $\Rightarrow$ (iii)'' is a result from the theory of individually eventually positive semigroups, see \cite[Theorem~5.2 and Corollary~3.3]{Daners2016a} or \cite[Theorem~5.1]{Daners2017}.
\end{proof}

\section{Applications}
\label{section:applications}
In this section we demonstrate how our results can be applied to several concrete differential equations.

\paragraph*{The Dirichlet-to-Neumann operator}
Let $\Omega \subseteq \bbR^d$ by a bounded and smooth domain, say with
$C^\infty$-boundary. Let us consider the Dirichlet-to-Neumann operator
on $L^2(\partial \Omega)$ defined as follows. Fix a real number $\lambda$ which is not contained in the spectrum of the Dirichlet--Laplace operator on $L^2(\Omega)$. For $g \in L^2(\partial \Omega)$ we solve, if possible, the problem
\begin{displaymath}
  \begin{aligned}
    \Delta u &= \lambda u&& \text{on $\Omega$,} \\
    u &= g \quad && \text{on $\partial \Omega$}
  \end{aligned}
\end{displaymath}
for a function $u \in H^1(\Omega;\bbC)$; then we compute, again if
possible, the normal derivative $\frac{\partial}{\partial \nu}$ of that
solution $u$. The
mapping $g \mapsto \frac{\partial u}{\partial \nu}$ is the \emph{Dirichlet-to-Neumann operator} $D_\lambda$ on $L^2(\partial \Omega)$. It is a self-adjoint linear operator on $L^2(\partial \Omega)$ whose domain we denote by $D(D_\lambda)$. For a precise definition of the Dirichlet-to-Neumann operator we refer for instance to \cite{Arendt2014} or \cite{Arendt2012}. The spectral bound $\spb(-D_\lambda)$ of $-D_\lambda$ can be shown to be finite so that $-D_\lambda$ generates a self-adjoint $C_0$-semigroup on $L^2(\partial\Omega)$.

If $\lambda$ is sufficiently large, then the semigroup generated by $-D_\lambda$ is always positive. However, if $\Omega$ is the unit disk in $\bbR^2$, there exist choices of the parameter $\lambda$ for which the semigroup $(e^{-D_\lambda})_{t \in \bbR}$ is eventually positive, but not positive. This was proved by the first author in \cite{Daners2014} by using Fourier series. The abstract theory of eventually positive semigroups was used to analyse the Dirichlet-to-Neumann semigroup in \cite[Section~6.2]{Daners2016} (on the space $C(\partial \Omega)$ of continuous functions, where $\Omega$ is the two-dimensional unit disk) and in \cite[Proposition~6.8]{Daners2016a} (on $L^2(\partial \Omega)$, where $\Omega$ is a smooth and bounded domain in $\bbR^2$). In both cases, the abstract theory only yielded conditions for individual eventual positivity of $(e^{-tD_\lambda})_{t \ge 0}$. Corollary~\ref{cor:self-adjoint-case} now gives an immediate criterion for uniform eventual positivity and answers some questions posed in \cite[Section~5]{Daners2014}.

\begin{theorem}
  Let $\Omega \subseteq \bbR^d$ be a bounded domain with $C^\infty$-boundary and let $\lambda \in \bbR$ be contained in the resolvent set of the Dirichlet Laplacian on $L^2(\Omega)$. Then the following assertions are equivalent:
  \begin{enumerate}[\upshape (i)]
  \item The semigroup $(e^{-tD_\lambda})_{t \ge 0}$ is uniformly eventually strongly positive with respect to $\one$.
  \item The semigroup $(e^{-tD_\lambda})_{t \ge 0}$ is individually eventually strongly positive with respect to $\one$.
  \item The largest eigenvalue of $-D_\lambda$ is geometrically simple and admits an eigenfunction $v$ which fulfils $v \gg \one$.
  \end{enumerate}
\end{theorem}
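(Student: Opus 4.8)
The plan is to recognize that this theorem is essentially a direct specialization of Corollary~\ref{cor:self-adjoint-case} to the concrete setting of the Dirichlet-to-Neumann operator. The operator $A := -D_\lambda$ is self-adjoint on $H := L^2(\partial\Omega)$ and generates a self-adjoint $C_0$-semigroup, so the abstract framework applies with $u = \one$ (the constant function on $\partial\Omega$), which is indeed a quasi-interior point of $L^2(\partial\Omega)_+$ since it is strictly positive almost everywhere. The equivalences (i)$\Leftrightarrow$(ii)$\Leftrightarrow$(iii) in the theorem are precisely the equivalences in Corollary~\ref{cor:self-adjoint-case}, with assertion (iii) rephrased in terms of the largest eigenvalue of $-D_\lambda$ (the spectral bound $\spb(-D_\lambda)$) and its eigenfunction. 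Thus the entire content of the proof reduces to verifying the smoothing hypothesis of the corollary, namely that some power of the semigroup (or the smooth vectors) map into $H_{\one}$.

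First I would check the one nontrivial hypothesis of Corollary~\ref{cor:self-adjoint-case}: that either $e^{t_1 A} H \subseteq H_{\one}$ for some $t_1 > 0$, or equivalently $D(A^\infty) \subseteq H_{\one}$. Here $H_{\one}$ is precisely $L^\infty(\partial\Omega)$, so what must be shown is a regularity statement: the smooth vectors of the Dirichlet-to-Neumann semigroup are bounded functions on $\partial\Omega$, and in fact dominated by a multiple of $\one$. Since the semigroup is self-adjoint and hence analytic, $e^{tA}H \subseteq D(A^\infty)$ for every $t > 0$, so it suffices to show $D(A^\infty) \subseteq L^\infty(\partial\Omega)$. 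This is where elliptic regularity theory for the Dirichlet-to-Neumann operator enters: each application of $D_\lambda$ gains derivatives in a suitable Sobolev scale on the boundary $\partial\Omega$, so that $D((D_\lambda)^n) \subseteq H^{s(n)}(\partial\Omega)$ with $s(n) \to \infty$, and by the Sobolev embedding theorem on the compact $(d-1)$-dimensional manifold $\partial\Omega$ these spaces embed into $C(\partial\Omega) \subseteq L^\infty(\partial\Omega)$ once $s(n)$ exceeds $(d-1)/2$. The smoothness of $\partial\Omega$ (assumed $C^\infty$) is exactly what licenses this elliptic bootstrap.

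The main obstacle I anticipate is establishing this regularity claim cleanly, i.e.\ identifying the precise mapping properties of $D_\lambda$ on the Sobolev scale of the boundary and ensuring that the domination $D(A^\infty) \subseteq H_{\one}$ (not merely inclusion in $L^\infty$) holds. Since $H_{\one} = L^\infty(\partial\Omega)$ as a set with equivalent norm, inclusion in $L^\infty$ suffices, so the task is genuinely just the Sobolev regularity of the smooth vectors. I expect the cleanest route is to invoke known results on the Dirichlet-to-Neumann operator as a pseudodifferential operator of order one on $\partial\Omega$ (as in the references \cite{Arendt2014, Arendt2012}): its powers are pseudodifferential of order $n$, and the domain $D((D_\lambda)^n)$ therefore coincides with $H^n(\partial\Omega)$, giving $D(A^\infty) = \bigcap_n H^n(\partial\Omega) = C^\infty(\partial\Omega) \subseteq L^\infty(\partial\Omega)$.

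Once this hypothesis is verified, the proof concludes immediately by citing Corollary~\ref{cor:self-adjoint-case}, with assertion (iii) of that corollary translating verbatim into assertion (iii) here: one-dimensionality of $\ker(\spb(-D_\lambda) + D_\lambda)$ is geometric simplicity of the largest eigenvalue of $-D_\lambda$, and the existence of $v \gg_{\one} 0$ is exactly the condition $v \gg \one$, meaning $v \ge \varepsilon \one$ for some $\varepsilon > 0$. No further estimates on eigenfunctions or explicit Fourier computations (as in the original disk case of \cite{Daners2014}) are needed; the abstract criterion does all the work, which is precisely the advertised improvement over the earlier individual-positivity results.
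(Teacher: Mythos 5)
Your proposal is correct, and its skeleton is identical to the paper's: both proofs are a direct specialization of Corollary~\ref{cor:self-adjoint-case} with $H = L^2(\partial\Omega)$, $A = -D_\lambda$ and $u = \one$, with assertion~(iii) translating verbatim. The only point of substance is how the smoothing hypothesis is verified, and there you diverge: the paper simply cites \cite[Theorem~2.3]{Elst2017} (ter Elst--Ouhabaz), which delivers both the self-adjointness of $D_\lambda$ and the ultracontractivity-type statement $e^{-tD_\lambda}L^2(\partial\Omega) \subseteq L^\infty(\partial\Omega) = \bigl(L^2(\partial\Omega)\bigr)_{\one}$ for all $t > 0$, i.e.\ the first alternative in the corollary's hypothesis; you instead verify the second alternative, $D(A^\infty) \subseteq H_{\one}$, by the elliptic route: $D_\lambda$ is an elliptic first-order pseudodifferential operator on the compact smooth manifold $\partial\Omega$, so $D(D_\lambda^n) = H^n(\partial\Omega)$, whence $D(A^\infty) = C^\infty(\partial\Omega) \subseteq L^\infty(\partial\Omega)$, and analyticity of the self-adjoint semigroup does the rest. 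Both arguments are sound; the paper's citation is shorter and covers the needed hypothesis verbatim (including self-adjointness, which your route takes as given from the setup preceding the theorem), while your bootstrap is more self-contained in spirit but leans on the standard facts that $D(D_\lambda) = H^1(\partial\Omega)$ and that $D_\lambda$ is pseudodifferential of order one --- facts which are classical for $C^\infty$ boundaries but are not actually established in the references \cite{Arendt2014, Arendt2012} you point to, so you would want a citation to the pseudodifferential literature (or to \cite{Elst2017} after all) to make that step airtight. A small presentational point: assertion~(iii) speaks of the \emph{largest eigenvalue} of $-D_\lambda$, which tacitly uses that the spectrum is discrete; in both proofs this is underwritten by the eventual compactness coming out of the smoothing hypothesis (Corollary~\ref{cor:domination-and-compactness}), and it would be worth a half-sentence in your write-up.
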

\begin{proof}
  We note that by \cite[Theorem~2.3]{Elst2017} the operator $D_\lambda$ is self-adjoint on $L^2(\partial\Omega)$ and that $e^{-t_1D_\lambda}L^2(\partial \Omega)\subseteq L^\infty(\partial \Omega)=\bigl(L^2(\partial \Omega)\bigr)_{\one}$ for all $t>0$. Hence the assumptions of Corollary~\ref{cor:self-adjoint-case} are satisfied, completing the proof of the theorem.
\end{proof}

\paragraph*{The Laplace operator with non-local boundary conditions}
Fix a matrix $B \in \bbR^{2 \times 2}$ and a compact real interval $[a,b] \subseteq \bbR$. Let us consider the evolution equation
\begin{displaymath}
  \begin{aligned}
    \frac{\partial u}{\partial t} & = \Delta u
    &&\text{in $(a,b)\times (0,\infty)$} \\
    u(\cdot,0)& = u_0
    &&\text{on $(a,b)$,} \\
    \frac{\partial}{\partial \nu} u & = - Bu|_{\{a,b\}}
  \end{aligned}
\end{displaymath}
on $L^2((a,b))$, where $\frac{\partial}{\partial \nu} u = (-u'(a), u'(b))$ denotes the outer normal derivative. More precisely, this equation can be written as $\frac{\dx}{\dx t} u = \Delta_B u$, where $-\Delta_B\colon L^2([a,b]) \supseteq D(\Delta_B) \to L^2([a,b])$ is the operator associated to the the form
\begin{displaymath}
  a(u,v)=\int_a^b u'\overline{v'}\,\dx x + \langle Bu|_{\{a,b\}}, v|_{\{a,b\}} \rangle
\end{displaymath}
on the Sobolev space $H^1(a,b)$. It is not difficult to see that we actually have
\begin{displaymath}
  D(\Delta_B) = \{u \in H^2(a,b)\colon\frac{\partial}{\partial \nu} u = - Bu|_{\{a,b\}}\}.
\end{displaymath}
The operator $\Delta_B$ is real and generates an analytic $C_0$-semigroup on $L^2([a,b])$. In \cite[Section~6]{Daners2016a} it was shown that this semigroup in individually eventually strongly positive with respect to the constant function $\one$ with value one for certain choices of $B$. Using the theory developed in the present paper, we can now prove that the eventual positivity is even uniform.
\begin{theorem}
  Let
  \begin{math}
    B =
    \begin{pmatrix}
      1 & 1 \\
      1 & 1
    \end{pmatrix}.
  \end{math}
  Then the semigroup $(e^{t\Delta_H})_{t \ge 0}$ is not positive, but uniformly eventually strongly positive with respect to $\one$.
\end{theorem}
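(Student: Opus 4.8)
The plan is to obtain the positivity statement from Corollary~\ref{cor:self-adjoint-case}, applied on $H = L^2([a,b];\bbC)$ with the quasi-interior point $u = \one$, and to establish non-positivity separately via a form criterion. First I would check the hypotheses of the corollary. Since $B$ is symmetric the form $a$ is symmetric, so $\Delta_B$ is self-adjoint; since $B$ is real the operator $\Delta_B$ is real, and hence the (analytic) semigroup $(e^{t\Delta_B})_{t\ge0}$ is real. The required domination is automatic: by analyticity $e^{t\Delta_B}H\subseteq D(\Delta_B)\subseteq H^2(a,b)$ for every $t>0$, and the one-dimensional Sobolev embedding gives $H^2(a,b)\subseteq L^\infty(a,b) = H_{\one}$. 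Thus $e^{t_1\Delta_B}H\subseteq H_{\one}$ for each $t_1>0$, and all hypotheses of Corollary~\ref{cor:self-adjoint-case} are met.

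It then suffices to verify one of the equivalent conditions of that corollary; I would verify (iii), i.e.\ that $\ker(\spb(\Delta_B)-\Delta_B)$ is one-dimensional and contains a vector $v\gg_{\one}0$. After translating so that $[a,b]=[-\ell,\ell]$ with $\ell=(b-a)/2$ (which leaves both the differential expression $-v''$ and the boundary conditions invariant), the eigenvalue problem $-v''=\mu v$ with $v'(-\ell)=v(-\ell)+v(\ell)$ and $v'(\ell)=-(v(-\ell)+v(\ell))$ is invariant under the reflection $x\mapsto -x$, so eigenfunctions split into even and odd ones. The even ansatz $v=\cos(\omega x)$ leads to $\omega\tan(\omega\ell)=2$, whose smallest root satisfies $\omega_0\ell\in(0,\pi/2)$ and yields $v_0=\cos(\omega_0 x)\ge\cos(\omega_0\ell)>0$ on $[-\ell,\ell]$, hence $v_0\gg_{\one}0$; the odd ansatz $v=\sin(\omega x)$ forces $\cos(\omega\ell)=0$, so the smallest odd eigenvalue corresponds to $\omega\ell=\pi/2>\omega_0\ell$. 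Since $-\Delta_B\ge 0$ has compact resolvent (its domain embeds compactly into $L^2$) and $0$ is not an eigenvalue, its smallest eigenvalue is $\omega_0^2$, which is simple (the even eigenspace is one-dimensional and no odd eigenfunction shares this eigenvalue) with strictly positive eigenfunction $v_0$. This is exactly (iii), so Corollary~\ref{cor:self-adjoint-case} gives uniform eventual strong positivity with respect to $\one$. (Alternatively, since the excerpt recalls that individual eventual strong positivity with respect to $\one$ was already proved in \cite[Section~6]{Daners2016a}, condition~(ii) of the corollary holds and the same corollary applies.)

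For non-positivity I would invoke the first Beurling--Deny criterion (see, e.g., \cite{Batkai2017}): the semigroup $e^{t\Delta_B}=e^{-t(-\Delta_B)}$ generated by the nonnegative self-adjoint form operator $-\Delta_B$ is positive if and only if $a(u^+,u^-)\le 0$ for all real $u\in H^1(a,b)$. Because $u^+$ and $u^-$ have essentially disjoint supports, $\int_a^b (u^+)'(u^-)'\,\dx x=0$, so the form reduces to its boundary part:
\begin{displaymath}
  a(u^+,u^-) = \bigl(u^+(a)+u^+(b)\bigr)\bigl(u^-(a)+u^-(b)\bigr).
\end{displaymath}
Choosing any real $u\in H^1(a,b)$ with $u(a)>0>u(b)$ gives $a(u^+,u^-)=-u(a)u(b)>0$, which violates the criterion. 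Hence the semigroup is not positive.

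The genuinely substantive step is the verification of~(iii): establishing that the ground state of $-\Delta_B$ is simple and strictly positive. The reflection reduction makes this transparent, but one must confirm both that the smallest even eigenvalue lies strictly below every odd eigenvalue and below all higher even ones (so that $\spb(\Delta_B)$ is attained by $v_0$ and is simple), and that $0$ fails to be an eigenvalue (so that $v_0$ does not degenerate to an affine function). The non-positivity statement and the domination hypothesis, by contrast, are routine once the form is written out explicitly.
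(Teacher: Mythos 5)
Your proposal is correct, and it takes a genuinely different route from the paper's proof. The paper handles both claims essentially by citation: it quotes \cite[Theorem~6.11]{Daners2016a} for non-positivity \emph{and} for individual eventual strong positivity with respect to $\one$, notes self-adjointness of $\Delta_B$ and the embedding $D(\Delta_B) \subseteq H^1(a,b) \subseteq L^\infty([a,b]) = \big(L^2([a,b])\big)_{\one}$, and then invokes the implication (ii)~$\Rightarrow$~(i) of Corollary~\ref{cor:self-adjoint-case} --- which is exactly the alternative you mention in your parenthetical remark. Your main line instead verifies condition~(iii) of that corollary from scratch: the translated boundary conditions $v'(\pm\ell) = \pm\big(v(-\ell)+v(\ell)\big)$ with the appropriate signs are right, the even/odd reduction and the transcendental equations $\omega\tan(\omega\ell)=2$ and $\cos(\omega\ell)=0$ are correct, and the routine checks you defer all go through (higher even roots satisfy $\omega\ell > \pi > \omega_0\ell$; an odd eigenfunction at $\omega_0^2$ would force $\cos(\omega_0\ell)=0$, impossible since $\omega_0\ell < \pi/2$; and $\mu = 0$ with $v = \alpha + \beta x$ forces $\beta = 2\alpha$ and $\beta = -2\alpha$, hence $v = 0$). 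Your non-positivity argument via the first Beurling--Deny criterion is likewise correct --- the gradient term $\int_a^b (u^+)'(u^-)'\,\dx x$ vanishes, so $a(u^+,u^-) = -u(a)u(b) > 0$ for $u(a) > 0 > u(b)$ --- and replaces the paper's citation with a two-line computation. What your approach buys is self-containedness and explicit spectral information (the spectral bound $-\omega_0^2$, the ground state $\cos(\omega_0 x)$, and a quantified gap to the rest of the spectrum); what the paper's buys is brevity and the avoidance of any eigenvalue computation. Both arguments rest on the same pillar, Corollary~\ref{cor:self-adjoint-case}, with the one-dimensional Sobolev embedding supplying the domination hypothesis (your $e^{t\Delta_B}H \subseteq D(\Delta_B) \subseteq H^2(a,b) \subseteq L^\infty$ via analyticity versus the paper's $D(\Delta_B) \subseteq H^1(a,b) \subseteq L^\infty$ --- equivalent for the purpose at hand).
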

\begin{proof}
  It was shown in \cite[Theorem~6.11]{Daners2016a} that the semigroup is not positive, but individually eventually positive with respect to $\one$. Since the matrix $B$ is self-adjoint, so is the operator $\Delta_B$. Moreover, we have $D(\Delta_B) \subseteq H^1(a,b) \subseteq L^\infty([a,b]) = \big(L^2([a,b])\big)_{\one}$. Hence, the assumptions of Corollary~\ref{cor:self-adjoint-case} are fulfilled, so we conclude from the corollary that our semigroup is even uniformly eventually strongly positive with respect to $\one$.
\end{proof}

Let us now consider a case where $B$ is not symmetric and hence the semigroup generator $\Delta_H$ is not self-adjoint. The operator $\Delta_B$ for the following choice of $B$ occurs in \cite{Guidotti2000} where a one-dimensional model for a thermostat is studied.
\begin{theorem}
	Let $B =
	\begin{pmatrix}
		0 & \beta \\
		0 & 0
	\end{pmatrix}
	$
	for a real number $\beta \in (0,1/2)$. Then the semigroup $(e^{t\Delta_H})_{t \ge 0}$ is not positive, but uniformly eventually strongly positive with respect to $\one$.
\end{theorem}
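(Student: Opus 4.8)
The plan is to apply Theorem~\ref{thm:criterion-semigroups} directly, exploiting the fact that the problem and its dual have the same structure on the reflexive space $L^2([a,b])$. Since $B$ is no longer symmetric, $\Delta_B$ is not self-adjoint and Corollary~\ref{cor:self-adjoint-case} no longer applies; instead I must verify the hypotheses of the main theorem for both $A = \Delta_B$ and its dual $A' = \Delta_{B'}$, where $B' = \begin{pmatrix} 0 & 0 \\ \beta & 0 \end{pmatrix}$ is the transpose. The natural choice is $E = L^2([a,b];\bbC)$ with $u = \one$ the constant function and $\varphi = \one$ the functional $f \mapsto \int_a^b f\,\dx x$. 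I would first record that $\Delta_B$ is real and generates an analytic $C_0$-semigroup, and that the same holds for $\Delta_{B'}$; reality is immediate because $B$ has real entries.

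First I would check Assumption~(a). Since the semigroup is analytic, Remark~\ref{rem:domination-condition}(a) reduces the domination condition $e^{t_1 A}E \subseteq E_{\one}$ to showing $D(\Delta_B^\infty) \subseteq E_{\one}$, and in fact the single inclusion $D(\Delta_B) \subseteq H^1(a,b) \subseteq L^\infty([a,b]) = \big(L^2([a,b])\big)_{\one}$ already suffices (one may invoke Remark~\ref{rem:domination-condition}(b) with $n=1$, since an analytic semigroup is certainly eventually differentiable). The identical argument applied to $\Delta_{B'}$, whose domain is also contained in $H^1(a,b) \subseteq L^\infty([a,b])$, gives $e^{t_2 A'}E' \subseteq (E')_{\one}$. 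Thus Assumption~(a) holds with $u = \varphi = \one$ and any $t_1, t_2 > 0$.

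The main work lies in Assumption~(b): I must show that $\spb(\Delta_B)$ is a dominant spectral value, that $\ker(\spb(\Delta_B) - \Delta_B)$ is one-dimensional spanned by a vector $v \gg_{\one} 0$, and that the dual eigenspace $\ker(\spb(\Delta_B) - \Delta_{B'})$ contains a functional $\psi \gg_{\one} 0$. I expect this spectral analysis to be the principal obstacle, and the cleanest route is via the resolvent. The plan is to invoke Corollary~\ref{cor:ind-vs-unif} rather than Theorem~\ref{thm:criterion-semigroups} directly: since \cite[Section~6]{Daners2016a} (presumably the relevant result for this thermostat matrix, as in the preceding theorem) establishes that $(e^{t\Delta_B})_{t \ge 0}$ is individually eventually strongly positive with respect to $\one$ for $\beta \in (0,1/2)$, it remains only to exhibit a dual eigenfunction $\psi \gg_{\one} 0$ in $\ker(\spb(\Delta_B) - \Delta_{B'})$. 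Because $\Delta_{B'}$ has exactly the same form structure as $\Delta_B$ with $B$ replaced by its transpose $B'$, which lies in the same admissible class, I would apply the individual eventual positivity results of \cite{Daners2016a} (or the resolvent criterion \cite[Theorem~4.4 and Corollary~3.3]{Daners2016a}) to the \emph{dual} problem to deduce that $\ker(\spb(\Delta_{B'}) - \Delta_{B'})$ is one-dimensional and contains a strictly positive, indeed $\gg_{\one}$, eigenfunction; since $\spb(\Delta_{B'}) = \spb(\Delta_B)$, this $\psi$ is exactly the required dual eigenfunction.

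With Assumption~(a) verified, the individual eventual strong positivity of the primal semigroup at hand, and the dual eigenfunction $\psi \gg_{\one} 0$ produced, Corollary~\ref{cor:ind-vs-unif} upgrades individual to uniform eventual strong positivity with respect to $\one$. The non-positivity of the semigroup is inherited from the corresponding statement in \cite{Daners2016a} (a genuinely positive semigroup cannot also fail positivity at small times), so I would simply cite it. The delicate point to handle carefully is confirming that the relevant hypothesis $\beta \in (0,1/2)$ is precisely what guarantees both the dominance of $\spb(\Delta_B)$ and the $\gg_{\one}$-positivity of the eigenfunctions on \emph{both} the primal and dual sides; this threshold is what makes the eigenfunctions bounded away from zero rather than merely nonnegative, and it is where any explicit computation or resolvent estimate from the thermostat model must be pinned down.
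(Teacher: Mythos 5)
Your proposal follows essentially the same route as the paper: both arguments invoke Corollary~\ref{cor:ind-vs-unif} with $u = \varphi = \one$, verify the domination hypotheses on both sides via $D(\Delta_B) \subseteq H^1(a,b) \subseteq L^\infty([a,b]) = \big(L^2([a,b])\big)_{\one}$, import non-positivity and individual eventual strong positivity from \cite[Theorem~6.10]{Daners2016a}, and obtain the dual eigenfunction $\psi \gg_{\one} 0$ from the fact that the dual operator is $\Delta_{B^T}$ with $B^T$ the transposed matrix.

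One step of yours is too quick, though it is repairable in one line. You assert that the transpose $B^T = \begin{pmatrix} 0 & 0 \\ \beta & 0 \end{pmatrix}$ ``lies in the same admissible class'', so that the results of \cite{Daners2016a} apply verbatim to the dual problem. Taken literally this is false: $B^T$ is not of the thermostat form treated there, so no cited theorem covers it directly. The paper's fix is the observation that passing to the transposed matrix amounts to swapping the two endpoints of the interval: the reflection $f \mapsto f(a+b-\phdot)$ is a lattice isomorphism of $L^2([a,b])$ fixing $\one$ which conjugates $\Delta_{B^T}$ to $\Delta_B$ (on the level of matrices, $JB^TJ = B$ for the flip matrix $J = \begin{pmatrix} 0 & 1 \\ 1 & 0 \end{pmatrix}$), so every statement about $\Delta_B$ transfers to the dual with the eigenfunction reflected, and $\gg_{\one}$-positivity is preserved. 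Note also that the paper does not rerun the eventual-positivity or resolvent machinery on the dual as you propose: it cites the explicit eigenfunction computation of \cite[Theorem~6.1]{Guidotti2000} (or \cite[Section~3]{Guidotti1997}) to get $v \gg_{\one} 0$ for $\Delta_B$, and the endpoint-swap then yields $\tilde v \gg_{\one} 0$ for $\Delta_B' = \Delta_B^*$. Your alternative of applying the individual-eventual-positivity results to the dual semigroup would also work, but only after the reflection identification is in place, so that observation is the one genuinely load-bearing point your write-up leaves implicit.
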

\begin{proof}
	According to~\cite[Theorem~6.10]{Daners2016a} the semigroup is not positive, but individually eventually strongly positive with respect to $\one$. Let us show that the assumptions of Corollary~\ref{cor:ind-vs-unif} are fulfilled. An explicit computation in \cite[Theorem~6.1]{Guidotti2000} or \cite[Section~3]{Guidotti1997} shows that the spectral bound of $\Delta_B$ is an eigenvalue with an eigenfunction $v \gg_{\one} 0$. Since $\Delta_B$ is real, we have $\Delta_B^* = \Delta_B'$ as explained before Corollary~\ref{cor:self-adjoint-case}. Moreover, considering the adjoint operator $\Delta_B^*$ is equivalent to considering the transposed matrix of $B$, and this in turn simply means that both end points of the interval $[a,b]$ are switched. Hence, the adjoint $\Delta_B^* = \Delta_B'$ also has an eigenvector $\tilde v \gg_{\one} 0$ for the eigenvalue $\spb(\Delta_B)$. Furthermore, we have $D(\Delta_B) \subseteq H^1(a,b) \subseteq L^\infty([a,b]) = \big( L^2([a,b]) \big)_{\one}$, and the same is true for the domain of $\Delta_B^* = \Delta_B'$, again, since $\Delta_B^*$ has the same structure as $\Delta_B$, but with switched end points of the interval. Hence, all assumptions of Corollary~\ref{cor:ind-vs-unif} are fulfilled, so the assertion follows.
\end{proof}

\paragraph*{The bi-Laplace operator with Dirichlet boundary conditions}
Let $\Omega \subseteq \bbR^d$ be a bounded domain with sufficiently smooth boundary, say $C^\infty$, and fix $p \in (1,\infty)$. We consider the evolution equation
\begin{displaymath}
  \begin{aligned}
    \dot u(t) &= -\Delta^2u(t)
    &&\text{in $\Omega\times(0,\infty)$,} \\
    u(0) &= u_0
    &&\text{in $\Omega$,} \\
    \gamma(u) &= \frac{\partial u}{\partial \nu} = 0
    &&\text{on $\partial\Omega\times(0,\infty)$,}
  \end{aligned}
\end{displaymath}
on $L^p := L^p(\Omega;\bbC)$, where $u_0 \in L^p$ is an initial function, where $\gamma\colon L^p(\Omega) \to L^p(\partial \Omega)$ denotes the trace operator and where $\frac{\partial}{\partial \nu}$ denotes the normal derivative. This evolution equation can be rewritten in an abstract form as $\dot u(t) = A_p  u(t)$, $u(0) = u_0$, if we encode the boundary conditions in the domain $D(A_p)$ of the operator $A_p$. More precisely, we define
\begin{align}
	\label{eq:bi-laplace-dirichlet}
	D(A_p) = W^{4,p}(\Omega;\bbC) \cap W^{2,p}_0(\Omega;\bbC), \qquad A_p u = -\Delta^2 u.
\end{align}
Positivity and eventual properties of this operator are a quite prevalent topic in the literature. For instance, the question whether the inverse of $-A_p$ is positive has its origins over a century ago (see e.g.\ \cite{Grunau1998} for a bit more historic information) and it turned out the (eventual) positivity properties of $A_p$ are closely related to the shape of the domain $\Omega$. For arbitrary domains one can not even expect that the eigenfunction for the first eigenvalue of $A_p$ is positive and hence, the results in \cite[Sections~3--5]{Daners2016a} and \cite[Sections~3--5]{Daners2017} show that we cannot expect any eventual positivity properties of $A_p$. If, however, the domain $\Omega$ is in a sense close to the unit ball, then positivity properties for the first eigenfunction of $A_p$ were, for instance, obtained in \cite[Section~5]{Grunau1998}. In \cite[Section~6]{Daners2016a} it was shown that in such a case the semigroup generated by $A_p$ is individually eventually strongly positive. However, \emph{uniform} eventual strong positivity is as today only known for the case $p = 2$ where it can be derived from the already known Corollary~\ref{cor:self-adjoint-case} (see \cite[Theorem~10.2.1 and Theorem~11.4.3]{GlueckDISS}). Using the results of the present paper we can now prove uniform eventual strong positivity on $L^p$ for every $p \in (1,\infty)$.

\begin{theorem}
  \label{thm:bi-laplace-dirichlet}
  Let $p \in (1,\infty)$ and let $\Omega \subseteq \bbR^d$ by a bounded domain with $C^\infty$-boundary which is sufficiently close to the unit ball in the sense of~\emph{\cite[Theorem~5.2]{Grunau1998}}. Let $A_p$ denote the bi-Laplace operator with Dirichlet boundary conditions on $L^p := L^p(\Omega;\bbC)$ which is given by~\eqref{eq:bi-laplace-dirichlet}.
	
  Then $A_p$ generates an analytic $C_0$-semigroup $(e^{tA_p})_{t \ge 0}$ on $L^p$ which is uniformly eventually strongly positive with respect to the function $u$ given by $u(\omega) := \dist(\omega,\partial \Omega)^2$ for all $\omega \in \Omega$.
\end{theorem}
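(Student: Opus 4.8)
The plan is to verify the hypotheses of Theorem~\ref{thm:criterion-semigroups} with the specific data of this problem, so that uniform eventual strong positivity follows immediately. Here the Banach lattice is $E = L^p(\Omega;\bbC)$, the quasi-interior point is $u(\omega) = \dist(\omega,\partial\Omega)^2$, and since $1 < p < \infty$ the space is reflexive, so the dual semigroup $(e^{tA_p'})_{t \ge 0}$ is automatically a $C_0$-semigroup on $E' = L^q(\Omega;\bbC)$ with $1/p + 1/q = 1$. The operator $A_p$ is real (real coefficients, symmetric differential operator) and its Banach space dual $A_p'$ is again a bi-Laplace operator with Dirichlet boundary conditions, now on $L^q$; in particular the problem and its dual have the same structure, which is exactly the situation the theorem is designed for. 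So I would first record that $A_p$ generates an analytic $C_0$-semigroup (this is classical for the bi-Laplacian with these boundary conditions) and that $\spec(A_p)$ is non-empty.

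Next I would verify Assumption~(a), the abstract Sobolev embedding. Since the semigroup is analytic, by Remark~\ref{rem:domination-condition}(a) it suffices to show $D(A_p^\infty) \subseteq E_u$, and dually $D((A_p')^\infty) \subseteq (E')_\varphi$ for a suitable strictly positive functional $\varphi$. For a function $f$ in $D(A_p^\infty) \subseteq \bigcap_n W^{4n,p}_0$-type spaces, elliptic regularity forces $f$ to be smooth up to the boundary and to vanish together with its normal derivative on $\partial\Omega$; the key estimate is that such an $f$ satisfies $|f(\omega)| \le c\,\dist(\omega,\partial\Omega)^2 = c\,u(\omega)$ near the boundary, which is precisely $f \in E_u$. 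This boundary decay of order two is forced by the two Dirichlet conditions $f = \partial_\nu f = 0$ together with a Taylor/Hardy-type estimate. The dual condition is handled symmetrically: choosing $\varphi$ to be the functional induced by the same weight (i.e. $\varphi = u$ viewed in $L^q$, or equivalently integration against $\dist(\cdot,\partial\Omega)^2$), the identical regularity argument applied to $A_p'$ on $L^q$ gives $D((A_p')^\infty) \subseteq (E')_\varphi$.

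For Assumption~(b) I would invoke the sharp positivity result of Grunau and Robert. By \cite[Theorem~5.2]{Grunau1998}, for domains $\Omega$ sufficiently close to the unit ball the first eigenfunction of $A_p$ is strictly positive, and in fact its boundary behaviour matches the weight $u$, so that it satisfies $v \gg_u 0$; simplicity of the first eigenvalue and its status as a dominant spectral value then follow from the eventual-positivity machinery, for instance via \cite[Corollary~3.3]{Daners2016a} once one knows the semigroup is individually eventually strongly positive (established in \cite[Section~6]{Daners2016a}). The dual eigenfunction $\psi \in \ker(\spb(A_p) - A_p')$ satisfying $\psi \gg_\varphi 0$ is obtained by running the same argument on $L^q$, using that $A_p'$ has the same structure and that the domain is the same.

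The main obstacle I anticipate is the precise matching of the weight $u(\omega) = \dist(\omega,\partial\Omega)^2$ to the actual boundary decay of smooth functions in the domain and, dually, to the boundary growth/integrability needed for $\varphi$. One must show two-sided control: not only that elements of $D(A_p^\infty)$ decay at least like $\dist(\cdot,\partial\Omega)^2$ (giving $f \in E_u$), but that the eigenfunction $v$ decays \emph{exactly} like this weight so that $v \gg_u 0$, i.e. $v \ge \varepsilon u$ near $\partial\Omega$. This two-sided boundary estimate for the first eigenfunction of the clamped bi-Laplacian is the genuinely analytic input, and is exactly what the perturbation results of \cite{Grunau1998} supply; once it is in place, the reflexive self-duality of the $L^p$ setup makes the dual conditions automatic, and Theorem~\ref{thm:criterion-semigroups} delivers the conclusion.
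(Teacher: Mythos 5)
Your proposal is correct and follows essentially the same route as the paper: the same domination argument (elliptic regularity giving $D(A_p^n) \subseteq (L^p)_u$ for large $n$, upgraded by analyticity, as in \cite[Proposition~6.6]{Daners2016a}), the same appeal to \cite[Theorem~5.2]{Grunau1998} for the two-sided boundary estimate $v \gg_u 0$, and the same identification of $A_p'$ with $A_q$ so that the dual conditions with $\varphi$ induced by the weight $u$ come for free. The only difference is organizational: the paper simply invokes Corollary~\ref{cor:ind-vs-unif} to upgrade the known individual eventual strong positivity to uniform, whereas you unfold that corollary's content inline (individual positivity plus \cite[Corollary~3.3]{Daners2016a} yielding dominance of $\spb(A_p)$ and simplicity of the eigenspace) and then apply Theorem~\ref{thm:criterion-semigroups} directly, which is the same logical path.
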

\begin{proof}
	According to \cite[Proposition~6.6]{Daners2016a} or \cite[Theorem~11.4.2]{GlueckDISS}, the semigroup is individually eventually strongly positive with respect to $u$. As explained in the proof of \cite[Proposition~6.6]{Daners2016a}, we have $D(A_p^n) \subseteq (L^p)_u$ for all sufficiently large $n \in \bbN$; hence, $e^{tA_p} L^p \subseteq (L^p)_u$ for all $t > 0$ since our semigroup is analytic. Moreover, the eigenspace $\ker(s(A_p) - A_p)$ is spanned by a function $v \gg_u 0$ according to \cite[Theorem~5.2]{Grunau1998}. Finally, it is not difficult to see that the dual operator $A_p'$ of $A_p$ coincides with the operator $A_q$ for $1/p + 1/q = 1$. To see that use for instance \cite[Proposition~8.3.1]{GlueckDISS}. Thus, $A_p'$ has similar properties as $A_p$ and hence, the assumptions of Corollary~\ref{cor:ind-vs-unif} are fulfilled and the assertion of the theorem follows.
\end{proof}

\begin{remark}
  Let the notation be as in Theorem~\ref{thm:bi-laplace-dirichlet}. For $p > 2$ the theorem also follows from the $L^2$-case (which has already been discussed in \cite[Theorem~11.4.3]{GlueckDISS}) since $L^p \subseteq L^2$.
\end{remark}

\paragraph*{A delay differential equation}

In \cite[Section~6.5]{Daners2016} and \cite[Section~11.6]{GlueckDISS} it was demonstrated that individual eventual positivity can occur for certain delay differential equations. Here, we are going to show by means of a concrete example that delay differential equations can also exhibit uniformly eventually positive behaviour. Fix a real number $c > 0$ and consider the evolution equation
\begin{equation}
  \label{eq:delay}
  \dot y(t)= c\left(\int_{t-2}^{t-1} y(s)\,\dx s
    -\int_{t-1}^t y(s)\,\dx s + y(t-2) - y(t)\right)
  \qquad \text{for } t \ge 0,
\end{equation}
where $y\colon [-2,\infty) \to \bbC$ is the wanted function. In order for~\eqref{eq:delay} to have a unique solution in any sense, we need to prescribe the values of $y$ on the interval $[-2,0]$. Delay equation of this type can, for instance, be treated by semigroup methods, and there are several possible choices for the underlying Banach space, the so-called \emph{state space}. A very common approach, which can for instance be found in \cite[Section~VI.6]{Engel2000}, is to use $C([-2,0])$ as state space. However, for the purpose of proving uniform eventual positivity, this space is not a good choice since the dual space of $C([-2,0])$ is extremely large and thus, we will have difficulties to check the second domination condition in Theorem~\ref{thm:criterion-semigroups}(a).

An alternative to formulate~\eqref{eq:delay} by means of a semigroup is to choose an $L^p$-space for some $p \in [1,\infty)$ as state space. This approach is for instance explained in the monograph \cite{Batkai2005}, and we will use it here for $p = 1$. We choose
\begin{displaymath}
  E := \bbC \times L^1([-2,0];\bbC)
\end{displaymath}
as our state space and we define a closed, densely defined linear operator $A\colon E \supseteq D(A) \to E$ by
\begin{displaymath}
  \begin{aligned}
    D(A)&:=\{(x, f)\in\bbC\times W^{1,1}((-2,0);\bbC)\colon f(0) = x\},\\
    A(x,f)&:=(\langle \Phi, f\rangle, f'),
  \end{aligned}
\end{displaymath}
where $\Phi$ is the continuous linear functional on $W^{1,1}((-2,0);\bbC)$ given by
\begin{displaymath}
  \langle \Phi, f \rangle
  := c \left(\int_{-2}^{-1} f(s)\,\dx s-\int_{-1}^0 f(s)\,\dx s+f(-2)-f(0)\right)
\end{displaymath}
for each $f \in W^{1,1}((-2,0);\bbC)$. According to \cite[Theorem~3.23]{Batkai2005}, $A$ generates a $C_0$-semigroup $(e^{tA})_{t \ge 0}$ on $E$. As explained in \cite[Section~3.1]{Batkai2005}, the solutions of the delay equation~\eqref{eq:delay} can be described by this semigroup. Noter that the monograph \cite{Batkai2005} only deals with delays on the time interval $[-1,0]$, but considering a different time interval does not change any of the results. Note that the operator $A$ is real, and hence so is the semigroup generated by $A$.

\begin{theorem}
  Let $E$ and $A\colon E \supseteq D(A) \to E$ be as described above and set $u := (1,\one_{[-2,0]}) \in E$. If $c\le\pi/16$, then the semigroup $(e^{tA})_{t \ge 0}$ is uniformly eventually strongly positive with respect to $u$.
\end{theorem}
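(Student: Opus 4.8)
The plan is to verify the hypotheses of Theorem~\ref{thm:criterion-semigroups} (or more conveniently its corollary, Corollary~\ref{cor:ind-vs-unif}) for the delay semigroup on $E = \bbC \times L^1([-2,0];\bbC)$ with the quasi-interior point $u = (1,\one_{[-2,0]})$. Since $E$ is built on an $L^1$-component, the strictly positive functional $\varphi$ should be taken to be integration, and the dual domination condition $e^{t_2A'}E' \subseteq (E')_\varphi$ must be handled on the dual side; this is precisely why the $L^1$ state space was chosen over $C([-2,0])$, as the authors remark. The scalar-field quantities do not matter for positivity, so throughout I work with the real semigroup.

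First I would establish the two domination conditions of assumption~(a). For the forward condition $e^{t_1A}E \subseteq E_u$, I would use the regularising (eventual differentiability / analyticity) properties of delay semigroups of this type: iterating the domain $D(A^n)$ forces the $L^1$-component into $W^{n,1}$ and hence, for $n$ large, into $L^\infty([-2,0]) = \big(L^1\big)_{\one}$, so $E_u = E$ up to the first coordinate; Remark~\ref{rem:domination-condition}(b) then yields $e^{tA}E \subseteq E_u$ for all sufficiently large $t$. For the dual condition I would identify $E' = \bbC \times L^\infty([-2,0];\bbC)$ and compute the dual operator $A'$ explicitly as the generator of the adjoint delay semigroup; the point is that with $\varphi$ chosen as in Remark~\ref{rem:domination-condition}(d) (essentially the constant functional $\one$ on the $L^1$-part), the condition $\varphi \ge \varepsilon\one$ holds, so $(E')_\varphi = E'$ and the dual domination is automatic. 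The structural observation that the $L^1$-dual is $L^\infty$ and that $\one$ is already an order unit there is what makes assumption~(a) cheap on this space.

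Next I would verify the spectral hypothesis, namely that $\spb(A)$ is a dominant spectral value with a one-dimensional eigenspace containing $v \gg_u 0$, together with a dual eigenfunctional $\psi \gg_\varphi 0$. The characteristic equation of~\eqref{eq:delay} is obtained by inserting $y(t) = e^{\lambda t}$, giving a transcendental equation in $\lambda$ whose solutions are the point spectrum of $A$; since the semigroup is eventually compact (this follows from Corollary~\ref{cor:domination-and-compactness} once assumption~(a) holds, exactly as in the proof of Corollary~\ref{cor:ind-vs-unif}), each vertical strip contains only finitely many eigenvalues and every spectral value is a pole of the resolvent. The real constraint $c \le \pi/16$ is the quantitative condition that should guarantee that the rightmost root $\lambda_0 = \spb(A)$ is real, simple, and strictly dominant, i.e. $\repart\lambda < \lambda_0$ for all other roots; I would prove this by analysing the characteristic function along vertical lines and bounding the contribution of the delay terms using $c \le \pi/16$. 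The associated eigenfunction has $L^1$-component $s \mapsto e^{\lambda_0 s}$, which is strictly positive and bounded away from zero on $[-2,0]$, giving $v \gg_u 0$; the analogous analysis of the transposed/adjoint delay equation yields $\psi \gg_\varphi 0$.

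The cleanest route is probably through Corollary~\ref{cor:ind-vs-unif}: if I can cite or re-derive that this semigroup is already \emph{individually} eventually strongly positive with respect to $u$ (the earlier works \cite{Daners2016, GlueckDISS} treat delay equations individually), then it remains only to produce the dominant dual eigenfunctional $\psi \gg_\varphi 0$, which reduces the spectral work to the dual problem alone. I expect the main obstacle to be the spectral analysis under the sharp threshold $c \le \pi/16$: proving that no complex root of the transcendental characteristic equation has real part equal to or exceeding $\lambda_0$ requires a careful estimate, and the constant $\pi/16$ presumably arises from optimising such a bound (plausibly via a $\pi/2$-type argument on the oscillatory terms combined with the total mass $2$ of the delay kernel). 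Verifying strict dominance, rather than mere membership of $\lambda_0$ in the spectrum, is the delicate analytic point; the remaining lattice-theoretic and domination steps are routine given the $L^1$/$L^\infty$ duality structure described above.
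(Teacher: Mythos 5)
Your framework is the right one -- apply Theorem~\ref{thm:criterion-semigroups} directly, exploiting the $L^1$ state space so that the dual domination condition trivialises -- and this matches the paper in outline. But the analytic heart of the theorem, the mechanism by which $c \le \pi/16$ forces spectral dominance, is only conjectured, not proved. The paper's argument begins with an observation you miss entirely: $\langle \Phi, \one_{[-2,0]} \rangle = c(1-1+1-1) = 0$, so $u$ itself is an eigenvector of $A$ for the eigenvalue $0$; this at once gives non-emptiness of the spectrum, identifies the candidate dominant value as $0$, produces the eigenvector $v = u \gg_u 0$, and yields one-dimensionality of $\ker A$. Dominance then goes as follows: any eigenvalue satisfies $\lambda = \langle \Phi, e^{\lambda \phdot}\rangle$; writing $\lambda = \alpha + i\beta$ with $\alpha \ge 0$, $\beta \ge 0$, the imaginary part gives $\beta \le 4c \le \pi/4$ (since $|e^{\alpha s}\sin(\beta s)| \le 1$ on $[-2,0]$ and the ``total mass'' of $\Phi$ is $4c$), hence $\cos(\beta \phdot)$ is non-negative and increasing on $[-2,0]$; if $\lambda \ne 0$ then $e^{\alpha \phdot}\cos(\beta \phdot)$ is strictly increasing, and the sign structure of $\Phi$ makes it strictly negative on strictly increasing non-negative functions, contradicting $\alpha \ge 0$ in the real part. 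Your ``$\pi/2$-type argument on the oscillatory terms combined with the total mass'' is the right instinct, but you have not carried it out, and you explicitly flag strict dominance as an open obstacle -- so the proposal does not yet prove the statement.

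Several of your deferred steps also need repair. For the forward domination you invoke Remark~\ref{rem:domination-condition}(b), which requires eventual differentiability of the semigroup -- not automatic here, and certainly not analyticity, which delay semigroups never have; the paper instead proves $e^{2A}E \subseteq \bbC \times C([-2,0];\bbC) \subseteq E_u$ directly, by constructing the operator $S(x,f) = \bigl(w_{(x,f)}(2), w_{(x,f)}(2+\phdot)\bigr)$ from the continuous first-component orbit and identifying $S = e^{2A}$ via the history-function structure of the delay semigroup (\cite[Proposition~3.9]{Batkai2005}) -- a self-contained argument that sidesteps regularity questions. For the dual side, the paper does not take $\varphi$ to be plain integration and then separately hunt for $\psi$: it computes an explicit element $\varphi = (1,v) \in \ker(A')$ with $v$ piecewise linear and $v \ge \one$, so that a single computation delivers strict positivity, $(E')_\varphi = E'$ (hence the dual domination for all $t \ge 0$), and $\psi = \varphi \gg_\varphi 0$ simultaneously; your plan to ``analyse the transposed/adjoint delay equation'' is unexecuted and would end up doing this same computation. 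Finally, your ``cleanest route'' via Corollary~\ref{cor:ind-vs-unif} is unavailable: the earlier works treat \emph{other} delay equations (largely on $C([-2,0])$), and no individual eventual positivity result for equation~\eqref{eq:delay} on $\bbC \times L^1$ exists to cite -- establishing it would require exactly the spectral analysis of assumption~(b) of Theorem~\ref{thm:criterion-semigroups}, so nothing is saved.
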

\begin{proof}
  We first note that $u$ is an eigenvector of $A$ for the eigenvalue $0$; this follows immediately from the definition of $A$. Moreover, it is easy to see that $\ker A$ is one-dimensional and spanned by $u$. Let us now show that $e^{2A}E \subseteq E_u = \bbC \times L^\infty([-2,0];\bbC)$.
	
  By $\pi_1\colon E \to \bbC$ we denote the projection onto the first component, i.e.\ $\pi(x,f) = x$ for all $(x,f) \in E$. We define a linear operator $S\colon E \to E$ in the following way: For all $(x,f) \in E$ and all $t \ge 0$, let $w_{(x,f)}(t) := \pi_1 e^{tA}(x,f)$; note that $w_{(x,f)}$ is a continuous mapping from $[0,\infty)$ to $\bbC$ since the semigroup $(e^{tA})_{t \ge 0}$ is strongly continuous. We set
  \begin{displaymath}
    S(x,f) = \bigl(w_{(x,f)}(2), w_{(x,f)}(2 + \phdot) \bigr)
    \quad \text{for all } (x,f) \in E.
  \end{displaymath}
  Clearly, $S$ is a linear mapping from $E$ to $E$, and in fact it maps $E$ to $\bbC \times C([-2,0];\bbC)$. Moreover, one readily checks that $S$ is continuous.
	
  We claim that $S = e^{2A}$. For $(x,f) \in D(A)$, it follows from the last assertion in \cite[Proposition~3.9]{Batkai2005} that $S(x,f) = e^{2A}(x,f)$. Since $D(A)$ is dense in $E$ we conclude that $S = e^{2A}$ as claimed. Hence, $e^{2A}E = SE \subseteq \bbC \times C([-2,0];\bbC) \subseteq E_u$.

  Now we consider the functional $\varphi \in E'$ which is given by 
  \begin{displaymath}
    \langle\varphi, (x,f) \rangle = x + c\left( \int_{-2}^{-1} (2+s) f(s)\, \dx s + \int_{-1}^0 -s f(s) \, \dx s + \int_{-2}^0 f(s) \, \dx s \right)
  \end{displaymath}
  for all $(x,f) \in E$. A straightforward computation shows that $\varphi \in \ker(A')$. We may identify $E'$ with $\bbC \times L^\infty([-2,0];\bbC)$, and under this identification $\varphi$ can be represented as the vector $(1, v)$, where $v(s) = 3+s$ for $s \in [-2,-1]$ and $v(s) = 1-s$ for $s \in (-1,0]$. Hence, $\varphi = (1,v) \ge (1, \one)$, which shows that $\varphi$ is a strictly positive functional and that even $(E')_\varphi = E'$. In particular, we have $e^{tA'} \subseteq (E')_\varphi$ for all $t \ge 0$.
	
  In order to apply Theorem~\ref{thm:criterion-semigroups}, it only remains to show that $0$ is the spectral bound of $A$ and that it is even a dominant spectral value of $A$. It follows from Corollary~\ref{cor:domination-and-compactness} that our semigroup $(e^{tA})_{t \ge 0}$ is eventually compact, so all spectral values of $A$ are in fact eigenvalues. It is easy to see that a complex number $\lambda$ is an eigenvalue of $A$ if and only if $\lambda  = \langle \Phi, e^{\lambda\phdot}\rangle$ (see also \cite[Section~3.2]{Batkai2005} for more sophisticated spectral results).
	
  Now, let $\lambda = \alpha + i\beta$ be an eigenvalue of $A$ where $\alpha,\beta \in \bbR$. Assume that $\alpha \ge 0$. We have to show that $\lambda = 0$. As the operator $A$ is real, all of its eigenvalues occur in complex conjugate pairs, so we may also assume that $\beta \ge 0$. By splitting the equation $\lambda  = \langle \Phi, e^{\lambda\phdot}\rangle$ into its real and imaginary part we obtain
  \begin{align}
    \label{eq:alpha} \alpha
    & = \langle \Phi, e^{\alpha \phdot} \cos(\beta \phdot)\rangle, \\
    \label{eq:beta}  \beta
    &  = \langle \Phi, e^{\alpha \phdot} \sin(\beta \phdot)\rangle.
  \end{align}
  Since the modulus of the function $e^{\alpha \phdot} \sin(\beta \phdot)$ on the interval $[-2,0]$ is bounded by $1$, equation~\eqref{eq:beta} and the definition of $\Phi$ yield $\beta = |\beta| \le 4c$. Since $c \le \pi/16$ by assumption, we conclude that $2\beta \le \frac{\pi}{2}$. Hence, the function $\cos(\beta \phdot)$ is non-negative and increasing on the interval $[-2,0]$. Now assume for a contradiction that $\lambda \not= 0$. Then $\alpha > 0$ or $\beta > 0$, so the non-negative function $e^{\alpha \phdot} \cos(\beta\phdot)$ on $[-2,0]$ is even strictly increasing. A short glance at the definition of $\Phi$ shows that this implies $\langle \Phi, e^{\alpha \phdot} \cos(\beta \phdot) \rangle < 0$, which contradicts~\eqref{eq:alpha}.
	
  Hence, $\lambda = 0$ and we conclude that $0$ is indeed the spectral bound and a dominant spectral value of $A$. Thus, all assumptions of Theorem~\ref{thm:criterion-semigroups} are fulfilled and the theorem yields the assertion.
\end{proof}

\paragraph*{Acknowledgements.}
This paper was initiated during a very pleasant stay of the second author at the University of Sydney. Part of the work was done while the second author was financially supported by a scholarship within the scope of the Landesgraduiertenf\"orderung Baden--W\"urttemberg (grant number 130l LGFG-E).

\pdfbookmark[1]{\refname}{biblio}%
\bibliographystyle{doi}%
\bibliography{literature_evpos}

\end{document}